\documentclass[12pt, a4paper]{amsart}

\usepackage{latexsym}

\usepackage{color}

\usepackage{amsthm,xparse}

\usepackage{amssymb}
\usepackage[centertags]{amsmath}
\usepackage{verbatim}

\usepackage[a4paper, hmargin=2.5cm, vmargin={3.5cm, 3.5cm}]{geometry}

\usepackage{units}

\usepackage[english]{babel}

\usepackage[colorlinks=true,linkcolor=red ,citecolor=blue]{hyperref}

%\usepackage[unicode=true,pdfusetitle,
% bookmarks=true,bookmarksnumbered=false,bookmarksopen=false,
% breaklinks=false,pdfborder={0 0 1},backref=false,colorlinks=false]
% {hyperref}

%\makeatletter

%\usepackage{datetime}
%\usepackage{epic, eepic}

% used for commands with optional arguments
%\usepackage{xargs}

%\usepackage{bbm}
\usepackage{amsthm}

%\usepackage{lineno,xcolor}
% Running line numbers:
% \linenumbers
% Same, but that reset on every page:
%\pagewiselinenumbers
%\setlength\linenumbersep{5pt}
%\renewcommand\linenumberfont{\normalfont\tiny\sffamily\color{black}}

\newtheorem{theorem}{Theorem}

 % "letter-numbered" theorems

\NewDocumentEnvironment{manual}{O{theorem}m}
{%
	\addtocounter{theorem}{-1}%
	\begin{#1}
	}
	{\end{#1}}

\newtheorem{lemma}[equation]{Lemma}

\newtheorem{proposition}[equation]{Proposition}
\newtheorem{corollary}[equation]{Corollary}

\newtheorem{claim}[equation]{Claim}

\theoremstyle{definition}

\newcommand{\theoremname}{testing}
\theoremstyle{remark}
\newtheorem*{remark*}{Remark}

\numberwithin{equation}{section}
%\numberwithin{theorem}{section}
%\numberwithin{lemma}{section}
%\numberwithin{corollary}{section}
%\numberwithin{question}{section}

%\numberwithin{corollary}{section}
%\numberwithin{definition}{section}

\newcommand{\e}{\varepsilon}

\newcommand{\eps}{\varepsilon}

\newcommand{\E}{\mathbb{E}}

\newcommand{\var}{\text{Var}}

\newcommand*{\dd}[1]{{\rm d}#1}

\newcommand\inner[2]{\langle #1, #2 \rangle}

\newcommand{\RomanNumeralCaps}[1]
{\rm{\MakeUppercase{\romannumeral #1}}}

%\global\long
\def\eqdef{\overset{\rm{def}}{=}}

\renewcommand{\ge}{\geqslant}

\newcommand{\bR}{\mathbb R}

\newcommand{\bZ}{\mathbb Z}

\newcommand{\bN}{\mathbb N}

\newcommand{\bB}{\mathbb B}

\newcommand{\ti}{\widetilde}

\setlength{\textwidth}{5.8in}
\oddsidemargin=0in \evensidemargin=0in
\linespread{1.2}

\title[Fluctuations in the Gaussian perturbations of the lattice]{Fluctuations of linear statistics for Gaussian perturbations of the lattice $\bZ^d$}
\author{Oren Yakir}
\thanks{Supported by ISF Grants 382/15, 1903/18 and by ERC Advanced Grant 692616}
\date{July 22, 2020. Revised February 4, 2021.}
\address{\tiny{School of Mathematics, Tel Aviv University,
		Tel Aviv 6997801, Israel.}}
\email{oren.yakir@gmail.com}
\begin{document}
	
	\maketitle
	
	\begin{abstract}
		We study the point process $W$ in $\bR^d$ obtained by adding an independent Gaussian vector to each point in $\bZ^d$. Our main concern is the asymptotic size of fluctuations of the linear statistics in the large volume limit, defined as
		\[
		N(h,R) = \sum_{w\in W} h\left(\frac{w}{R}\right),
		\]
		where $h\in \left(L^1\cap L^2\right)(\mathbb{R}^d)$ is a test function and $R\to \infty$. We will also consider the stationary counter-part of the process $W$, obtained by adding to all perturbations a random vector which is uniformly distributed on $[0,1]^d$ and is independent of all the Gaussians.  We focus on two main examples of interest, when the test function $h$ is either smooth or is an indicator function of a convex set with a smooth boundary whose curvature does not vanish.
	\end{abstract}
		
	\section{Introduction and the main results} 
	We consider the random point process in $\bR^d$
	\begin{equation*}
	\label{eq:definition_of_the_process}
	W = \left\{n + \xi_{n} \mid n\in \mathbb{Z}^d\right\},
	\end{equation*}
	where $\left\{\xi_n\right\}_{n\in \mathbb{Z}^d}$ are independent and identically distributed symmetric Gaussian random vectors with density
	\begin{equation}
	\label{eq:d_gaussian_measure}
	\phi_a(x) = (a\pi)^{-d/2}e^{-|x|^2/a}, \qquad a>0
	\end{equation} 
	with respect to the Lebesgue measure on $\bR^d$. We will also be interested in the \emph{stationarized} version of the process $W$, defined as
	\begin{equation*}
	\label{eq:definition_of_the_stat_process}
	W_{\sf s} = \left\{n + \xi_{n} + \zeta \mid n\in \mathbb{Z}^d\right\}
	\end{equation*}
	where the sequence of random vectors $\{\xi_{n}\}_n$ is the same as before and $\zeta$ is a random vector uniformly distributed on $[0,1]^d$ and independent of all the $\xi_{n}$. We note that the distribution of the random set $W_{\sf s}$ is invariant with respect to all translations of $\bR^d$ (also known as stationary), while the distribution of the random set $W$ is only invariant with respect to translations by points of $\bZ^d$. 
	
	We represent the point process $W$ as a random measure, given by 
	\begin{equation}
	\label{eq:definition_of_random_measure}
	\mathbf{n} = \sum_{w\in W} \delta_w,
	\end{equation}
	where $\delta_x$ is a unit point mass at the point $x\in \mathbb{R}^d$. Then, a common way of studying the asymptotic behavior of $W$ is to introduce the random variable
	\begin{equation}
	\label{eq:def_linear_stat}		
	N(h,R) \eqdef \int_{\bR^d} h\left(\frac{x}{R}\right) \, {\rm d}\mathbf{n}(x) = \sum_{w\in W} h\left(\frac{w}{R}\right)
	\end{equation}
	called the \emph{linear statistics} of $W$. Here, $h\in\left(L^1\cap L^2 \right)\left(\mathbb{R}^d\right)$ is a test function and $R>0$ is a large parameter. For the stationary process $W_{\sf s}$, we denote by $\mathbf{n}_{\sf s}$ and $N_{\sf s}(h,R)$ the induced measure (\ref{eq:definition_of_random_measure}) and the linear statistics (\ref{eq:def_linear_stat}) defined in a similar way (with the sum in both (\ref{eq:definition_of_random_measure}) and (\ref{eq:def_linear_stat}) running on $W_{{\sf s}}$ instead of $W$).
	\begin{remark*}
		Although the values of the linear functional $h\mapsto N(h,R)$ depends on the choice of the representative $h\in L^1(\bR^d)$, the distribution of $N(h,R)$ (as a random variable) do not change after redefining $h$ on a set of measure zero. As we will only be interested in the statistical properties of $N(h,R)$, we will neglect this issue throughout the paper.
	\end{remark*}
	\subsection{The mean}
	Denote by $\E(X)$ the expectation of a random variable $X$. It is not surprising (see Corollary \ref{cor:mean_linear_statistics_first_order}) that for all test functions $h\in \left(L^1\cap L^2\right)\left(\mathbb{R}^d\right)$,
	\begin{equation}
	\label{eq:mean_linear_statistic_asymptotics}
	\E\left[N(h,R)\right]  = \left(\int_{\bR^d} h \, {\rm d}m_d + o(1)\right) R^{d}
	\end{equation} 
	as $R\to\infty$. Here and throughout $m_d$ is the Lebesgue measure on $\bR^d$. We also note that, since $W_{\sf s}$ has a translation-invariant distribution with unit intensity,
	\begin{equation}
		\label{eq:mean_linear_statistics_stat}
		\E\left[N_{\sf s}(h,R)\right] = R^d \int_{\bR^d} h \, {\rm d}m_d.
	\end{equation}
	Denote by $K_R \eqdef \left\{Rx \mid x\in K\right\}$ the dilation of a bounded domain $K\subset \mathbb{R}^d$. By definition, we have
	$
	\mathbf{n}\left(K_R\right) = N\left(\mathbf{1}_K,R\right)
	$
	where $\mathbf{1}_K$ is the indicator function of $K$, so (\ref{eq:mean_linear_statistic_asymptotics}) yields that
	\begin{equation}
		\label{eq:limit_mean_indicator_of_domains}
		\lim_{R\to\infty} \frac{\E\left[\mathbf{n}\left(K_R\right)\right]}{R^d} = m_d(K).
	\end{equation} 
	We will be interested in the remainder term in (\ref{eq:limit_mean_indicator_of_domains}). In Section \ref{sec:mean_for_linear_statistics} we prove that
	\[
	\E\left[\mathbf{n}\left(K_R\right)\right] = m_d(K) R^d + \mathcal{O}\left(R^{(d-1)/2}\right)
	\]
	provided that $K$ is a compact convex set such that $\partial K$ has nowhere vanishing Gaussian curvature (for the definition of Gaussian curvature of a surface, see for example \cite[Chapter~1.2, p.~49-50]{sogge}). The assumption on the Gaussian curvature is essential; we will show in Section \ref{sec:mean_number_of_points_in_large_cube} that if $K=[-\frac{1}{2},\frac{1}{2}]^d$ is the unit cube then the remainder term in (\ref{eq:limit_mean_indicator_of_domains}) can be as large as $R^{d-1}$.
	
	In view of (\ref{eq:mean_linear_statistics_stat}), it may seem that the additional uniform perturbation introduced in $W_{\sf s}$ ``regularizes" the mean and suppresses fluctuations. In what follows we will show that this is \emph{not} the case, as the variance of linear statistics of $W_{\sf s}$ can be much larger than the same variance with respect to $W$.
	
	\subsection{Fluctuations of linear statistics} We will be interested in determining the asymptotic of $\var\left(N\left(h,R\right)\right)$ as $R$ tends to infinity for various classes of test functions $h\in \left(L^1\cap L^2\right)\left(\mathbb{R}^d\right)$. Recall that $\var\left(X\right)$ is the variance of a random variable $X$, defined as
	$$
	\var\left(X\right) = \E\left[\left(X - \E\left[X\right]\right)^2\right].
	$$ 
	Unlike relation (\ref{eq:mean_linear_statistic_asymptotics}) for the  mean, the leading order asymptotic of the variance depends on smoothness properties of the test function $h$. We will prove that for $K\subset \bR^d$ we have
	\[
	\text{Var}(\mathbf{n}(K_R)) \sim c_a R^{d-1} \times (\text{Surface area of } \partial K)
	\]
	provided that $K$ is a compact convex set such that $\partial K$ has nowhere vanishing Gaussian curvature (see Theorem \ref{thm:variance_for_convex_sets}). In contrary to the point count, for smooth test functions $h$ such that $|\nabla h|\in L^2(\bR^d)$ we show that
	\[
	\text{Var}(N(h,R)) \sim c_a R^{d-2}\times \int_{\bR^d} |\nabla h|^2
	\]
	see Theorem \ref{thm:variance_for_sobolev_space} below. 

	The starting point for both results mentioned above is an exact formula for the variance of general linear statistics (see Theorem \ref{thm:variance_linear_statstic}). Another application of Theorem \ref{thm:variance_linear_statstic} is an upper bound for $\text{Var}(N(h,R))$ which is valid for all test functions $h\in(L^1\cap L^2)(\bR^d)$, and interpolates between the case of smooth linear statistics and the case of point count (cf. Theorem \ref{thm:upper_bound_for_variance}).
	
	We normalize the Fourier transform of a function $f\in L^1\left(\mathbb{R}^d\right)$ as
	\[
	\widehat{f}(\lambda) = \int_{\mathbb{R}^d} f(x) e^{-2\pi i\inner{x}{\lambda}} {\rm{d}}m_{d}(x),
	\]
	where $\inner{\cdot}{\cdot}$ is the inner product in $\mathbb{R}^d$. The following result is a simple application of the Poisson summation formula (for the proof, see Section \ref{sec:fluctuations_of_linear_statistics}).
	\begin{theorem}
		\label{thm:variance_linear_statstic}
		For any $h\in\left(L^1\cap L^2\right)\left(\mathbb{R}^d\right)$, we have
		\begin{align*}
		{\normalfont \text{Var}} &\left(N(h,R)\right) \\ &= R^d\sum_{m\in \bZ^d} \left\{\int_{\mathbb{R}^d}\widehat{h}(\lambda)\widehat{h}(Rm-\lambda)\left(e^{-a\pi^2|m|^2} - e^{-a\pi^2|\lambda|^2/R^2}e^{-a\pi^2|Rm-\lambda|^2/R^2}\right)\dd m_{d}(\lambda)\right\}. 
		\end{align*}
	\end{theorem}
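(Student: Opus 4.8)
The plan is to exploit the independence of the perturbations $\{\xi_n\}$ to reduce the variance to two lattice sums, each of which is evaluated by the Poisson summation formula. Writing $g = h(\cdot/R)$, we have $N(h,R) = \sum_{n\in\bZ^d} g(n+\xi_n)$, a sum of \emph{independent} summands, so $\var(N(h,R)) = \sum_{n\in\bZ^d}\var\bigl(g(n+\xi_n)\bigr)$. Since $\xi_n$ has the even density $\phi_a$, computing the first two moments directly gives $\E[g(n+\xi_n)] = (g*\phi_a)(n)$ and $\E[g(n+\xi_n)^2] = (g^2*\phi_a)(n)$, whence
\[
\var\bigl(N(h,R)\bigr) = \sum_{n\in\bZ^d}(g^2*\phi_a)(n) \;-\; \sum_{n\in\bZ^d}\bigl((g*\phi_a)(n)\bigr)^2 .
\]
This is the identity on which everything rests, and it is where the structural input (independence plus the convolution form of the Gaussian smoothing) is used.

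Next I would apply the Poisson summation formula $\sum_{n}F(n)=\sum_{m}\widehat F(m)$ to each sum separately, taking $F=g^2*\phi_a$ in the first and $F=(g*\phi_a)^2$ in the second, and then compute $\widehat F(m)$ using the two basic facts $\widehat{\phi_a}(\lambda)=e^{-a\pi^2|\lambda|^2}$ and the dilation identity $\widehat g(\lambda)=R^d\widehat h(R\lambda)$. For the first term $\widehat F = \widehat{g^2}\,\widehat{\phi_a}$ with $\widehat{g^2}=\widehat g*\widehat g$, and the substitution $\lambda\mapsto\lambda/R$ yields $\widehat F(m)=R^d e^{-a\pi^2|m|^2}\int \widehat h(\lambda)\widehat h(Rm-\lambda)\,\dd\lambda$. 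For the second term $\widehat F = (\widehat g\,\widehat{\phi_a})*(\widehat g\,\widehat{\phi_a})$, and the same substitution produces the factor $e^{-a\pi^2|\lambda|^2/R^2}e^{-a\pi^2|Rm-\lambda|^2/R^2}$ once one uses $|m-\lambda/R|^2=|Rm-\lambda|^2/R^2$. Subtracting the two resulting series gives exactly the asserted formula, the first Gaussian factor $e^{-a\pi^2|m|^2}$ coming from $|Rm|^2/R^2=|m|^2$.

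The one point that genuinely requires care -- and the only content beyond bookkeeping -- is justifying Poisson summation and the Fubini interchanges in the convolution computations for a test function merely in $\left(L^1\cap L^2\right)(\bR^d)$, rather than Schwartz. The saving grace is the Gaussian smoothing: $h\in L^1$ forces $\widehat h$ to be bounded and continuous, while $h\in L^2$ gives $h^2\in L^1$ and hence $\widehat{h^2}=\widehat h*\widehat h$ bounded; consequently multiplication by $\widehat{\phi_a}(\lambda)=e^{-a\pi^2|\lambda|^2}$ (respectively by the two Gaussian factors) forces $\widehat F$ to decay at a Gaussian rate, so that $\widehat F\in L^1$ and the samples $\{\widehat F(m)\}_{m\in\bZ^d}$ are absolutely summable. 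This legitimizes passing to the periodization of $F$ and identifying $\sum_n F(n)$ with $\sum_m \widehat F(m)$, and makes every interchange of summation and integration valid. I expect this analytic justification -- controlling the periodizations of $g^2*\phi_a$ and $(g*\phi_a)^2$ and the associated Fubini steps -- to be the main (though essentially routine) obstacle, with the algebraic identity following immediately once the framework is in place.
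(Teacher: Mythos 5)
Your proposal is correct and follows essentially the same route as the paper: reduce the variance to $\sum_{n}\bigl[(g^2*\phi_a)(n)-((g*\phi_a)(n))^2\bigr]$ via independence, verify the hypotheses of Poisson summation using the Gaussian smoothing, and compute the Fourier transforms of the two convolution terms. The only cosmetic difference is that you apply Poisson summation to the two sums separately while the paper applies it once to their difference $H=\widetilde{h^2}-(\widetilde h)^2$; since each piece individually has an absolutely and uniformly convergent periodization and Gaussian-decaying Fourier samples, this is immaterial.
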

	\begin{remark*}
		The Gaussian nature of the perturbations is not essential for Theorem \ref{thm:variance_linear_statstic} to hold. For more general perturbations (e.g., the ones with a density) one can replace in Theorem \ref{thm:variance_linear_statstic} the terms $e^{-a\pi^2|\cdot|^2}$ with the corresponding characteristic function (the Fourier transform of the distribution of the perturbation).
	\end{remark*}
	Although not difficult, Theorem \ref{thm:variance_linear_statstic} is the starting point to all the results we present from now on. In fact, Theorem $\ref{thm:variance_linear_statstic}$ can be thought of as the ``Fourier side" of the equality
	\[
	\var\left(N(h,1)\right) = \iint_{\bR^d\times\bR^d} h(x)h(y) G(x,y) \, \dd m_d(x) \dd m_d(y)
	\]
	where $G:\bR^d\times\bR^d\to \bR$ is the \emph{(full) two-point function} as defined in \cite[Section~4]{ghosh_lebowitz_survey}. When considering the Gaussian perturbations of the lattice $W$, the corresponding two-point function is given by 
	\begin{equation}
		\label{eq:two_point_function}
		G(x,y) = \left(\sum_{n\in \bZ^d} \phi_a(x-n)\right) \delta(x-y) - \left(\sum_{n\in\bZ^d}\phi_a(x-n)\phi_a(y-n)\right)
	\end{equation}
	where $\phi_a$ is given by (\ref{eq:d_gaussian_measure}). For a proof of \eqref{eq:two_point_function} see also \cite[Appendix~C]{klatt_kim_torquato}.
	
	As a first application of Theorem \ref{thm:variance_linear_statstic}, we prove in Section $\ref{sec:fluctuations_of_linear_statistics}$ the following upper bound, valid for a large family of test functions. We write $A\lesssim B$ if there exist a positive constant $C$ so that $A\leq C\cdot B$. The constant $C$ may depend on the dimension $d$ and the dispersion parameter $a$.
	\begin{theorem}
		\label{thm:upper_bound_for_variance}
		Let $h\in \left(L^1\cap L^2\right)(\mathbb{R}^d)$. Then, as $R\to\infty$,
		\begin{equation}
		\label{eq:upper_bound_for_variance_general_statistic}
		{\normalfont \text{Var}}\left(N(h,R)\right) \lesssim R^{d-2} \int_{|\lambda| \leq R} |\widehat{h}(\lambda)|^2|\lambda|^2 {\rm d} m_d(\lambda) + R^d\int_{|\lambda| \ge R} |\widehat{h}(\lambda)|^2 {\rm d} m_d(\lambda).
		\end{equation}
	\end{theorem}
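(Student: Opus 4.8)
The plan is to start from the exact identity in Theorem~\ref{thm:variance_linear_statstic} and dominate each summand in absolute value, writing $F=|\widehat h|$ for brevity; since the variance is real and nonnegative, it suffices to control the modulus of the right-hand side there. I would first isolate the diagonal term $m=0$. The bracket then collapses to $\int_{\bR^d}\widehat h(\lambda)\widehat h(-\lambda)\bigl(1-e^{-2a\pi^2|\lambda|^2/R^2}\bigr)\,\dd m_d(\lambda)$, and after replacing $\widehat h(\lambda)\widehat h(-\lambda)$ by $F(\lambda)^2$ (legitimate for the upper bound, by Cauchy--Schwarz and the symmetry $\lambda\mapsto-\lambda$ of the even nonnegative weight), the elementary inequality $1-e^{-t}\le\min(1,t)$ with $t=2a\pi^2|\lambda|^2/R^2$, split at $|\lambda|=R$, reproduces \emph{exactly} the two terms of \eqref{eq:upper_bound_for_variance_general_statistic}. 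Thus the whole task reduces to dominating the off-diagonal sum $m\neq0$ by the same two quantities, which I name $\mathrm{(I)}=R^{d-2}\int_{|\lambda|\le R}F^2|\lambda|^2$ and $\mathrm{(II)}=R^d\int_{|\lambda|>R}F^2$.

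The algebraic heart is that for $m\neq0$ the two exponentials almost cancel. Writing $\mu=Rm-\lambda$ and using $|m|^2=|\lambda+\mu|^2/R^2$, the bracket equals $e^{-a\pi^2(|\lambda|^2+|\mu|^2)/R^2}\bigl(e^{-2a\pi^2\inner{\lambda}{\mu}/R^2}-1\bigr)$; combining $|e^{x}-1|\le|x|e^{|x|}$ (with $x=-2a\pi^2\inner{\lambda}{\mu}/R^2$), the Cauchy--Schwarz bound $|\inner{\lambda}{\mu}|\le|\lambda||\mu|$, and the completed square $-(|\lambda|^2+|\mu|^2)+2|\lambda||\mu|=-(|\lambda|-|\mu|)^2$ yields the refined pointwise estimate
\[
\Bigl|e^{-a\pi^2|m|^2}-e^{-a\pi^2|\lambda|^2/R^2}e^{-a\pi^2|\mu|^2/R^2}\Bigr|\ \lesssim\ \frac{|\lambda||\mu|}{R^2}\,e^{-a\pi^2(|\lambda|-|\mu|)^2/R^2}.
\]
I would keep in reserve the cruder uniform estimate $|\text{bracket}|\le e^{-\frac12 a\pi^2|m|^2}$ (valid since $|\lambda|^2+|\mu|^2\ge\frac12|Rm|^2$), which trades the frequency factors for honest Gaussian decay in $m$.

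Then I would split the $(\lambda,m)$-integral by the sizes of $|\lambda|$ and $|\mu|$ relative to $R$, using $|\lambda|+|\mu|\ge R|m|$. On the \emph{low--low} region $|\lambda|,|\mu|\le R$ only $|m|\le2$ survive, and the refined bound together with $\int F(\lambda)|\lambda|\,F(\mu)|\mu|\le X$ (Cauchy--Schwarz, $X=\int_{|\lambda|\le R}F^2|\lambda|^2$) gives $\lesssim R^{d-2}X=\mathrm{(I)}$. On the \emph{high--high} region $|\lambda|,|\mu|>R$ I use the crude bound and Cauchy--Schwarz localized to $|\lambda|>R$ (the substitution $\lambda\mapsto\mu$ disposing of the second factor), giving $R^d\sum_{m\neq0}e^{-\frac12 a\pi^2|m|^2}\int_{|\lambda|>R}F^2\lesssim\mathrm{(II)}$. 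The \emph{mixed} region (say $|\lambda|\le R<|\mu|$, doubled by symmetry) is the main obstacle: a single Cauchy--Schwarz is too lossy there, so I retain the small factor $|\lambda|/R$ from the refined bound (using $|\mu|\lesssim R(|m|+1)$ and, for $|m|\ge3$, the decay $e^{-a\pi^2(|\lambda|-|\mu|)^2/R^2}\lesssim e^{-c|m|^2}$ coming from $|\mu|-|\lambda|\ge R(|m|-2)$), arriving at $R^{d-1}\sum_{m\neq0}(|m|+1)e^{-c|m|^2}\,X^{1/2}Y^{1/2}$ with $Y=\int_{|\lambda|>R}F^2$. The balancing inequality $R^{d-1}\sqrt{XY}\le R^{d-2}X+R^dY=\mathrm{(I)}+\mathrm{(II)}$ then closes the estimate, the finitely many small-$|m|$ terms being absorbed into the implicit constant. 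Summing the three regions and the diagonal term gives \eqref{eq:upper_bound_for_variance_general_statistic}.
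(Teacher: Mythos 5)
Your argument is correct and shares the paper's overall skeleton: both start from the exact formula of Theorem \ref{thm:variance_linear_statstic}, handle the diagonal term $m=0$ with $1-e^{-t}\le\min(1,t)$ split at $|\lambda|=R$, and control the off-diagonal sum by a region decomposition plus Cauchy--Schwarz, with the Gaussian factor $e^{-ca|m|^2}$ making the sum over $m$ harmless. Where you genuinely differ is in the off-diagonal estimate. The paper partitions $\lambda$-space by proximity to the two ``centers'' $0$ and $Rm$ (balls of radius $R/2$ around each, plus the complement), and on the near-zero region factors out $e^{-a\pi^2|m|^2}$ before linearizing $1-e^{-2a\pi^2(|\lambda|^2+\langle Rm,\lambda\rangle)/R^2}$. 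You instead derive the symmetric pointwise bound $|\mathrm{bracket}|\lesssim R^{-2}|\lambda||\mu|\,e^{-a\pi^2(|\lambda|-|\mu|)^2/R^2}$ by completing the square in $\langle\lambda,\mu\rangle$, and then partition by the sizes of $|\lambda|$ and $|\mu|=|Rm-\lambda|$ relative to $R$. This buys you a cleaner treatment of the cross terms: the constraint $|\lambda|+|\mu|\ge R|m|$ kills all but finitely many $m$ in the low--low region, and in the mixed region the residual Gaussian $e^{-a\pi^2(|\lambda|-|\mu|)^2/R^2}$ supplies the decay in $m$ directly, whereas the paper has to extract it from the prefactor $e^{-a\pi^2|m|^2}$ surviving the linearization of $e^{-2a\pi^2\langle m,\lambda\rangle}$ (a step that is slightly delicate there, since $\langle m,\lambda\rangle$ is not bounded uniformly in $m$). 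The final balancing $R^{d-1}\sqrt{XY}\le R^{d-2}X+R^dY$ appears in both proofs, only disguised in the paper by its reduction to $R=1$. The one point worth making explicit in a final write-up is the replacement of $\widehat h(\lambda)\widehat h(-\lambda)$ by $|\widehat h(\lambda)|^2$ in the $m=0$ term: your symmetrization argument covers it, and for real-valued $h$ it is an identity.
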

	Observe that the right hand side of (\ref{eq:upper_bound_for_variance_general_statistic}) implicitly interpolates between the $L^2$-norm of the function $h$ and the $L^2$-norm of $\nabla h$. As we will soon show (see Theorems \ref{thm:variance_for_sobolev_space} and \ref{thm:variance_for_convex_sets}), the first term in the sum dominates in the case of smooth test functions and the second term dominates when the test function is an indicator of a bounded domain. 
	
	Although we will prove Theorem \ref{thm:upper_bound_for_variance} directly, it is worth mentioning that one may obtain the upper bound (\ref{eq:upper_bound_for_variance_general_statistic}) by estimating directly the two-point function $G$ given in (\ref{eq:two_point_function}) (cf. \cite[Lemma~A.2]{sedimentation}). We also mention that a similar type of interpolation formula for the fluctuations of stationary zeros of the Gaussian analytic function appeared in the work of Nazarov and Sodin \cite[Theorem~1.1]{sodin_nazarov}. We do not know whether the corresponding lower bound to Theorem \ref{thm:upper_bound_for_variance} holds for arbitrary test functions $h$. 
	
	Moving on to consider the stationary version $W_{\sf s}$ of our process, we obtain the following formula which is another consequence of Theorem \ref{thm:variance_linear_statstic}.
	\begin{theorem}
		\label{thm:variance_linear_statstic_stat}
		For any $h\in\left(L^1\cap L^2\right)\left(\mathbb{R}^d\right)$ we have
		\begin{align*}
		\label{eq:variance_linear_statistic_stat}
		{\normalfont \text{Var}} &\left(N_{\sf s}(h,R)\right) \\ & = R^d\int_{\bR^d}|\widehat{h}(\lambda)|^2 \left(1-e^{-2a\pi^2|\lambda|^2/R^2} \right) \dd m_d(\lambda) + R^{2d}\sum_{m\in \bZ^d \setminus\{0\}} e^{-2a\pi^2|m|^2} |\widehat{h}(Rm)|^2.
		\end{align*}
	\end{theorem}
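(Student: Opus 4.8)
The plan is to exploit the fact, announced before the statement, that this is another consequence of Theorem \ref{thm:variance_linear_statstic}, by conditioning on the uniform shift $\zeta$ and invoking the law of total variance
\[
\var\left(N_{\sf s}(h,R)\right) = \E_\zeta\left[\var\left(N_{\sf s}(h,R)\mid \zeta\right)\right] + \var_\zeta\left(\E\left[N_{\sf s}(h,R)\mid \zeta\right]\right).
\]
The two terms will produce, respectively, the integral and the lattice sum in the asserted identity. The key observation is that, given $\zeta$, one may write $h((n+\xi_n+\zeta)/R) = \tilde h_\zeta((n+\xi_n)/R)$ with $\tilde h_\zeta(x) = h(x+\zeta/R)$, so that conditionally on $\zeta$ the variable $N_{\sf s}(h,R)$ has the same law as the non-stationary statistic $N(\tilde h_\zeta,R)$, to which Theorem \ref{thm:variance_linear_statstic} applies verbatim.

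First I would treat $\E_\zeta[\var(N_{\sf s}(h,R)\mid\zeta)]$. Since $\widehat{\tilde h_\zeta}(\lambda) = \widehat{h}(\lambda)\, e^{2\pi i\inner{\zeta/R}{\lambda}}$, substituting $\tilde h_\zeta$ into the formula of Theorem \ref{thm:variance_linear_statstic} multiplies the summand indexed by $m$ by the phase $e^{2\pi i\inner{\zeta/R}{\lambda}}e^{2\pi i\inner{\zeta/R}{Rm-\lambda}} = e^{2\pi i\inner{\zeta}{m}}$. Averaging over $\zeta$ uniform on $[0,1]^d$ annihilates every term with $m\neq 0$, because $\int_{[0,1]^d} e^{2\pi i\inner{\zeta}{m}}\dd m_d(\zeta)$ vanishes for $m\in\bZ^d\setminus\{0\}$ and equals $1$ for $m=0$. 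Only the $m=0$ term survives, and (using $\widehat h(-\lambda)=\overline{\widehat h(\lambda)}$, so that $\widehat h(\lambda)\widehat h(-\lambda)=|\widehat h(\lambda)|^2$ for the real-valued test functions of interest) it collapses to $R^d\int_{\bR^d}|\widehat h(\lambda)|^2(1-e^{-2a\pi^2|\lambda|^2/R^2})\dd m_d(\lambda)$, the first term of the claimed identity.

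Next I would compute the variance of the conditional mean. Setting $H \eqdef h(\cdot/R)\ast\phi_a$, independence of the $\xi_n$ gives $\E[N_{\sf s}(h,R)\mid\zeta] = \sum_{n\in\bZ^d} H(n+\zeta)$, a $\bZ^d$-periodic function of $\zeta$ whose $m$-th Fourier coefficient is exactly $\widehat H(m)$. Using $\widehat{\phi_a}(\lambda)=e^{-a\pi^2|\lambda|^2}$ together with the scaling of the Fourier transform one gets $\widehat H(\lambda)=R^d\,\widehat h(R\lambda)\,e^{-a\pi^2|\lambda|^2}$, hence $\widehat H(m)=R^d\widehat h(Rm)e^{-a\pi^2|m|^2}$. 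Parseval on the torus then yields $\E_\zeta[(\sum_n H(n+\zeta))^2]=\sum_{m\in\bZ^d}|\widehat H(m)|^2$, while $\E_\zeta[\sum_n H(n+\zeta)]=\widehat H(0)$ is precisely the mean $R^d\int h\,\dd m_d$ recorded in (\ref{eq:mean_linear_statistics_stat}). Subtracting its square removes the $m=0$ term and leaves $R^{2d}\sum_{m\in\bZ^d\setminus\{0\}}e^{-2a\pi^2|m|^2}|\widehat h(Rm)|^2$, the second term of the identity.

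The steps needing genuine care are the analytic justifications rather than the algebra: interchanging the $\zeta$-average with the $m$-sum and $\lambda$-integral coming from Theorem \ref{thm:variance_linear_statstic}, and the passage to Fourier coefficients for the periodization $\sum_n H(n+\zeta)$. I expect this to be the main obstacle. Both points are controlled by the observation that, although $h$ lies merely in $L^1\cap L^2$, the function $\widehat H(\lambda)=R^d\widehat h(R\lambda)e^{-a\pi^2|\lambda|^2}$ inherits Gaussian decay (as $\widehat h$ is bounded and continuous), so $\widehat H$ is rapidly decreasing; this guarantees $\sum_m|\widehat H(m)|^2<\infty$, the absolute convergence required for Parseval on $[0,1]^d$, and the legitimacy of the Fubini interchanges. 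Adding the two contributions gives the stated formula.
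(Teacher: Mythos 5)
Your proposal is correct and follows essentially the same route as the paper: the law of total variance conditioned on $\zeta$, with the conditional variance handled by applying Theorem \ref{thm:variance_linear_statstic} to the shifted test function (whose phase factor $e(\inner{m}{\zeta})$ averages to zero for $m\neq 0$), and the variance of the conditional mean computed from the Fourier expansion of the periodization of $h(\cdot/R)\ast\phi_a$ in $\zeta$ together with orthogonality of the characters $e(\inner{m}{\zeta})$ on $[0,1]^d$. Your Parseval-on-the-torus phrasing of the second term is just a repackaging of the paper's direct expansion of the square, and your justification of the interchanges via the Gaussian decay of $\widehat{h}(R\cdot)e^{-a\pi^2|\cdot|^2}$ matches the paper's appeal to uniform absolute convergence.
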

	Theorem $\ref{thm:variance_linear_statstic_stat}$ yields the lower bound
	\begin{equation*}
	\label{eq:lower_bound_for_variance_general_statistic_stat}
	{\normalfont \text{Var}}\left(N_{\sf s}(h,R)\right) \gtrsim R^{d-2} \int_{|\lambda| \leq R} |\widehat{h}(\lambda)|^2|\lambda|^2 {\rm d} m_d(\lambda) + R^d\int_{|\lambda| \geq R} |\widehat{h}(\lambda)|^2 {\rm d} m_d(\lambda)
	\end{equation*} 
	(cf. Theorem \ref{thm:upper_bound_for_variance}). Curiously, in Section \ref{sec:variance_of_number_of_points_inside_large_cube} we give a simple example that shows the matching upper bound for $\var\left(N_{\sf s}(h,R)\right)$ \emph{does not} hold.
	
	We now turn our focus to two natural classes of test functions where we are able to say more on the fluctuations.
	\subsection{Smooth linear statistics}
	Recall that the \emph{Sobolev space} $H^1(\bR^d)$ consists of functions $h\in L^2(\bR^d)$ having the distributional gradient $\nabla h$ such that $|\nabla h|\in L^2(\bR^d)$. Equivalently, the space $H^1(\bR^d)$ can be characterized by the condition 
	\[
	\int_{\bR^d} |\widehat{h}(\lambda)|^2\left(1 + |\lambda|^2\right) \dd m_d(\lambda) < \infty.
	\] 
	A good reference for basic facts about the Sobolev space is the book by H\"{o}rmander \cite[Chapter~7]{hormander}.
	\begin{theorem}(Smooth statistics)
		\label{thm:variance_for_sobolev_space}
		Suppose that $h\in \left(L^1\cap H^1\right)(\mathbb{R}^d)$. Then,
		\begin{equation*}
		\lim_{R\to\infty} \frac{{\normalfont \text{Var}}\left(N(h,R)\right)}{R^{d-2}} = \frac{a}{2}\int_{\mathbb{R}^d} |\nabla h(x)|^2 {\rm d} m_d(x).
		\end{equation*}
	\end{theorem}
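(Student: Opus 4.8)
The plan is to start from the exact identity in Theorem \ref{thm:variance_linear_statstic} and split the sum over $m\in\bZ^d$ into the diagonal term $m=0$, which I expect to carry the entire main-order contribution, and the remaining terms, which should be negligible. Throughout, write $\mu=Rm-\lambda$ and denote by $F_m(\lambda)=e^{-a\pi^2|m|^2}-e^{-a\pi^2(|\lambda|^2+|\mu|^2)/R^2}$ the bracketed factor (for $m=0$ the second exponential is $e^{-2a\pi^2|\lambda|^2/R^2}$). Since $h$ is real-valued we have $\widehat h(-\lambda)=\overline{\widehat h(\lambda)}$, so the $m=0$ summand equals $R^d\int_{\bR^d}|\widehat h(\lambda)|^2\bigl(1-e^{-2a\pi^2|\lambda|^2/R^2}\bigr)\,\dd m_d(\lambda)$.

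For this $m=0$ term I would divide by $R^{d-2}$, obtaining $\int_{\bR^d}|\widehat h(\lambda)|^2\,R^2\bigl(1-e^{-2a\pi^2|\lambda|^2/R^2}\bigr)\,\dd m_d(\lambda)$. Here $R^2(1-e^{-2a\pi^2|\lambda|^2/R^2})\to 2a\pi^2|\lambda|^2$ pointwise and is dominated by $2a\pi^2|\lambda|^2$ (using $1-e^{-t}\le t$), a function integrable against $|\widehat h|^2\,\dd m_d$ precisely because $h\in H^1$. Dominated convergence then gives the limit $2a\pi^2\int_{\bR^d}|\lambda|^2|\widehat h(\lambda)|^2\,\dd m_d(\lambda)$, and Plancherel together with $\widehat{\nabla h}(\lambda)=2\pi i\lambda\widehat h(\lambda)$ rewrites this as $\tfrac{a}{2}\int_{\bR^d}|\nabla h|^2\,\dd m_d$, exactly the claimed constant.

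The main obstacle is showing that the terms with $m\neq 0$ contribute only $o(R^{d-2})$. The crucial algebraic observation is that the two exponents in $F_m$ differ by exactly $a\pi^2u$ with $u=(|\lambda|^2+|\mu|^2)/R^2-|m|^2=-2\inner{\lambda}{\mu}/R^2$, so that $|u|\le 2|\lambda||\mu|/R^2$. Combining the elementary inequality $|e^{-s}-e^{-t}|\le e^{-\min(s,t)}|s-t|$ with $|\lambda|^2+|\mu|^2\ge\tfrac12|Rm|^2$ (whence $\min\{a\pi^2|m|^2,\,a\pi^2(|\lambda|^2+|\mu|^2)/R^2\}\ge\tfrac{a\pi^2}{2}|m|^2$) yields the clean pointwise bound $|F_m(\lambda)|\le \tfrac{2a\pi^2}{R^2}e^{-a\pi^2|m|^2/2}|\lambda||\mu|$. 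Extracting this cancellation from $F_m$ is the one genuinely clever step; everything after it is routine.

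Feeding this estimate into the $m$-th summand and applying Cauchy--Schwarz, I would bound each such term by a constant multiple of $\tfrac{2a\pi^2}{R^2}e^{-a\pi^2|m|^2/2}\int_{\bR^d}|\lambda|^2|\widehat h(\lambda)|^2\,\dd m_d(\lambda)$; after multiplying by $R^d$ and summing the convergent Gaussian series over $m\neq 0$ this is already of the correct order $O(R^{d-2})$. To upgrade $O(R^{d-2})$ to $o(R^{d-2})$ I would use that for $m\neq 0$ at least one of $|\lambda|,|\mu|$ exceeds $R/2$, so one Cauchy--Schwarz factor may be restricted to the tail $\{|\lambda|\ge R/2\}$; since $\delta_R:=\int_{|\lambda|\ge R/2}|\widehat h(\lambda)|^2|\lambda|^2\,\dd m_d(\lambda)\to 0$ as $R\to\infty$ by integrability, the full $m\neq0$ contribution is $\lesssim \delta_R^{1/2}R^{d-2}=o(R^{d-2})$. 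Adding the $m=0$ and $m\neq0$ contributions then gives the stated limit.
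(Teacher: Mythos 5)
Your proposal is correct, and its skeleton is the same as the paper's: start from Theorem \ref{thm:variance_linear_statstic}, isolate the $m=0$ term, evaluate it by dominated convergence (with the dominating function $2a\pi^2|\lambda|^2|\widehat h(\lambda)|^2$, integrable since $h\in H^1$) together with the identity $4\pi^2\int|\widehat h(\lambda)|^2|\lambda|^2\,\dd m_d(\lambda)=\int|\nabla h|^2\,\dd m_d$, and show that the $m\neq0$ terms sum to $o(R^{d-2})$ via Cauchy--Schwarz and the decay of the tail $\int_{|\lambda|\ge R/2}|\widehat h(\lambda)|^2|\lambda|^2\,\dd m_d(\lambda)$. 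Where you genuinely diverge is in how the remainder estimate (the paper's Claim \ref{claim:bound_on_remainder_variance_sobolev}) is proved. The paper splits each $m\neq0$ integral into the three regions $\{|\lambda|\le R/2\}$, $\{|\lambda-Rm|\le R/2\}$ and their complement, applying $1-e^{-x}\le\min\{x,2\}$ and Cauchy--Schwarz separately in each, and ends with a bound of the form $C_ae^{-a\pi^2|m|^2/2}|m|\,o(R^{d-2})$. You instead extract a single global pointwise bound $|F_m(\lambda)|\le 2a\pi^2R^{-2}e^{-a\pi^2|m|^2/2}|\lambda|\,|Rm-\lambda|$ from the mean value inequality $|e^{-s}-e^{-t}|\le e^{-\min(s,t)}|s-t|$, the identity $|\lambda|^2+|Rm-\lambda|^2-R^2|m|^2=-2\inner{\lambda}{Rm-\lambda}$, and the parallelogram bound $|\lambda|^2+|Rm-\lambda|^2\ge\frac{1}{2}R^2|m|^2$; all three steps check out. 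One Cauchy--Schwarz over all of $\bR^d$ then gives $O(R^{d-2})$ per term, and since $|\lambda|+|Rm-\lambda|\ge R|m|\ge R$ forces one of the two frequencies into the tail $\{|\cdot|\ge R/2\}$, restricting the corresponding factor upgrades this to $o(R^{d-2})$ uniformly in $m$ up to the summable Gaussian weight. Your route buys a shorter, case-free proof of the key estimate (and drops the harmless extra factor $|m|$); the paper's region-by-region argument is what generalizes directly to the $L^2$ setting of Theorem \ref{thm:upper_bound_for_variance}, where no gradient is available and one must keep the low- and high-frequency contributions separate.
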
  
	Focusing on the planar case $d=2$, we see that the variance of smooth linear statistics tends to a non-zero limit as $R\to \infty$. A similar behavior for smooth linear statistics was observed also for eigenvalues of large Ginibre random matrix (that is, square matrices where all entries are i.i.d. complex Gaussians) in the work of Rider and Vir\'{a}g \cite{rider_virag}. We also mention that the limit in Theorem \ref{thm:variance_for_sobolev_space} was noticed in the work of Sodin and Tsirelson \cite{sodin_tsirelson} (see the introduction therein) whenever the test function $h\in C^2(\bR^2)$ has compact support. 
	
	It is worth mentioning that an analogue of Theorem \ref{thm:variance_for_sobolev_space} does not hold for the stationary counterpart $W_{\sf s}$. In Section \ref{sec:large_variance_for_function_in_Sobolev_space} we provide a function $g\in \left(L^1\cap H^1\right)(\mathbb{R}^d)$ for $d\ge2$ such that
	\[
	\limsup_{R\to \infty} \frac{\var\left(N_{\sf s}(g,R)\right)}{R^{d-2}} = +\infty.
	\]
	\subsection{Number of points in convex sets}
	Here we turn our attention to test functions which are indicators of convex sets with smooth boundary in $\bR^d$, $d\ge 2$. 
	\begin{theorem}
		\label{thm:variance_for_convex_sets}
		Suppose that $K\subset \mathbb{R}^d$ is a compact convex set such that $\partial K$ is a smooth closed manifold with nowhere vanishing Gaussian curvature. Then
		\begin{equation}
			\label{eq:limit_of_variance_in_thm_convex_set}
			\lim_{R\to\infty} \frac{{\normalfont \text{Var}}\left(\mathbf{n}(K_R)\right)}{R^{d-1}} = \sqrt{\frac{a}{2\pi}} \cdot \sigma_{d-1}(\partial K),
		\end{equation}
		where $\sigma_{d-1}(\partial K)$ is the surface area of $\partial K$. 
	\end{theorem}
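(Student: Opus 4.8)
The plan is to feed $h=\mathbf{1}_K$ into the Fourier formula of Theorem~\ref{thm:variance_linear_statstic}, extract the leading term from the diagonal summand $m=0$, and discard the rest. The one analytic ingredient needed is the classical stationary-phase asymptotics for $\widehat{\mathbf{1}_K}$. Writing $\lambda=t\omega$ with $t=|\lambda|$ and $\omega\in\bS^{d-1}$, the divergence theorem expresses $\widehat{\mathbf{1}_K}(\lambda)$ as a surface integral over $\partial K$ whose phase $x\mapsto\langle x,\omega\rangle$ has exactly two nondegenerate critical points, namely the points $x_\pm(\omega)\in\partial K$ at which the outer unit normal equals $\pm\omega$; the Hessian determinant of the phase there is the Gaussian curvature $\kappa(x_\pm(\omega))$, which is nonzero by hypothesis. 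Stationary phase then yields
\[
|\widehat{\mathbf{1}_K}(\lambda)|^2=\frac{1}{4\pi^2|\lambda|^{d+1}}\left(\frac{1}{\kappa(x_+(\omega))}+\frac{1}{\kappa(x_-(\omega))}\right)+E(\lambda),
\]
where the error $E$ gathers the cross term and the lower-order corrections and carries an oscillatory factor of the form $e^{\pm 2\pi i t\,w_K(\omega)}$, with $w_K(\omega)$ the width of $K$ in the direction $\omega$.

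Substituting this into the $m=0$ summand
\[
T_1(R)=R^d\int_{\bR^d}|\widehat{\mathbf{1}_K}(\lambda)|^2\left(1-e^{-2a\pi^2|\lambda|^2/R^2}\right)\dd m_d(\lambda),
\]
I would rescale $\lambda=R\mu$ and pass to polar coordinates $\mu=r\omega$. For each $\omega$ the oscillatory part $E$ then produces a radial integral of $e^{\pm 2\pi i R r\,w_K(\omega)}$ against the integrable density $r^{-2}\bigl(1-e^{-2a\pi^2 r^2}\bigr)$, which tends to $0$ as $R\to\infty$ by the Riemann--Lebesgue lemma. The main part factors,
\[
\lim_{R\to\infty}\frac{T_1(R)}{R^{d-1}}=\frac{1}{4\pi^2}\left(\int_{\bS^{d-1}}\Big(\tfrac{1}{\kappa(x_+(\omega))}+\tfrac{1}{\kappa(x_-(\omega))}\Big)\dd\sigma(\omega)\right)\int_0^\infty\frac{1-e^{-2a\pi^2 r^2}}{r^2}\,\dd r.
\]
The radial integral equals $\sqrt{2a\pi^3}$ (differentiate in $a$ and integrate back from $a=0$). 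For the angular integral I would use that the Gauss map $x\mapsto\nu(x)$ from $\partial K$ to $\bS^{d-1}$ has Jacobian $\kappa$, whence $\int_{\bS^{d-1}}\kappa(x_\pm(\omega))^{-1}\,\dd\sigma(\omega)=\sigma_{d-1}(\partial K)$ for either sign and the bracket equals $2\sigma_{d-1}(\partial K)$. Multiplying out, the constant collapses to exactly $\sqrt{a/2\pi}\,\sigma_{d-1}(\partial K)$.

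To finish the first assertion I must control the off-diagonal terms $m\neq 0$ of Theorem~\ref{thm:variance_linear_statstic}. Since $\widehat{\mathbf{1}_K}(\lambda)$ and $\widehat{\mathbf{1}_K}(Rm-\lambda)$ cannot both sit at low frequency, the decay $|\widehat{\mathbf{1}_K}(\nu)|\lesssim(1+|\nu|)^{-(d+1)/2}$ costs a genuine power of $R$. Concretely, the part carrying $e^{-a\pi^2|m|^2}$ collapses through $\widehat{\mathbf{1}_K}\ast\widehat{\mathbf{1}_K}=\widehat{\mathbf{1}_K}$ to $R^d\sum_{m\neq 0}e^{-a\pi^2|m|^2}\widehat{\mathbf{1}_K}(Rm)=O(R^{(d-1)/2})$, and after completing the square in the exponent the Gaussian part is bounded in the same fashion. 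Both are $o(R^{d-1})$ when $d\ge 2$, whence $\var(\mathbf{n}(K_R))=T_1(R)+o(R^{d-1})$ and the first statement follows.

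For the stationary process I would start instead from Theorem~\ref{thm:variance_linear_statstic_stat}, whose first summand is precisely $T_1(R)$, so the correct constant is already in hand. The entire content of the ``furthermore'' then rests on the remaining discrete sum
\[
T_2(R)=R^{2d}\sum_{m\in\bZ^d\setminus\{0\}}e^{-2a\pi^2|m|^2}\,|\widehat{\mathbf{1}_K}(Rm)|^2,
\]
which is the variance, over the uniform shift $\zeta$, of the $\phi_a$-smoothed count of lattice points inside $K_R$. Establishing that $T_2(R)$ does not alter the limit is the step I expect to be the main obstacle: unlike the continuous integral $T_1$, here the oscillatory quantity $R^{d+1}|\widehat{\mathbf{1}_K}(Rm)|^2$ is sampled only along the fixed directions $m/|m|$, so the Riemann--Lebesgue averaging that tamed $T_1$ is unavailable, and the required cancellation must instead be extracted from the lattice sum itself, leaning on the nonvanishing curvature of $\partial K$ together with the rapidly decaying weight $e^{-2a\pi^2|m|^2}$. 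This estimate on $T_2$ is the crux of the stationary half of the theorem.
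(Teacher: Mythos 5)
Your treatment of the diagonal term $m=0$ is correct and reaches the right constant, but it takes a genuinely different route from the paper: you expand $|\widehat{\mathbf{1}_K}|^2$ by stationary phase and integrate the Gauss-map Jacobian over the sphere, whereas the paper (Claim \ref{claim:leading_order_for_var_convex_sets}) passes to physical space by Plancherel, Taylor-expands the covariogram $\mathbf{1}_K\ast\mathbf{1}_K$ at the origin, and invokes Cauchy's surface area formula. The paper's route is more elementary and, notably, requires no curvature hypothesis (it applies verbatim to the cube, which the paper needs elsewhere); your route needs nonvanishing curvature already at this stage but is otherwise sound, including the Riemann--Lebesgue treatment of the oscillatory cross term and the evaluation of the radial integral.

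The genuine gap is in the off-diagonal terms, specifically your one-line claim that after completing the square ``the Gaussian part is bounded in the same fashion'' as $A_m^1$. It is not. The weight $e^{-2a\pi^2|\lambda-Rm/2|^2/R^2}$ destroys the exact collapse $\widehat{\mathbf{1}_K}\ast\widehat{\mathbf{1}_K}=\widehat{\mathbf{1}_K}$, and the obvious fallback --- putting absolute values inside and using $|\widehat{\mathbf{1}_K}(\nu)|\lesssim(1+|\nu|)^{-(d+1)/2}$ --- only yields
\[
\int_{\bR^d}|\widehat{\mathbf{1}_K}(\lambda)|\,|\widehat{\mathbf{1}_K}(Rm-\lambda)|\,\dd m_d(\lambda)\lesssim (R|m|)^{-1},
\]
hence $|A_m^2(\mathbf{1}_K,R)|\lesssim R^{d-1}e^{-a\pi^2|m|^2/2}|m|^{-1}$, which after summation is $O(R^{d-1})$ --- the \emph{same} order as the main term, so nothing is proved. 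One must exploit oscillation in the integral itself; the paper does this in Claim \ref{claim:stationary_phase_upper_bound_convolution_with_convex} by converting $A_m^2$ via Plancherel into $\sup_x|\widehat{\mathbf{1}_{K\cap(x-K)}}(Rm)|$ and running the divergence theorem plus the H\"{o}rmander surface-measure bound (\ref{eq:bound_on_fourier_transfom_of_manifold}) on the two boundary pieces of the intersection body, uniformly in $x$. This is the technical heart of the theorem and is missing from your argument. Finally, you are right that the discrete sum $T_2(R)$ is the crux of the stationary statement and you do not prove it is $o(R^{d-1})$; note that the decay bound (\ref{eq:bound_on_fourier_transform_of_smooth_set}) alone again gives only $O(R^{d-1})$ for $T_2$, and since $R^{d+1}|\widehat{\mathbf{1}_K}(Rm)|^2$ is sampled along fixed directions its oscillation need not average out, so your concern is well founded and this half of your proposal remains incomplete.
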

	Theorem \ref{thm:variance_for_convex_sets} implies that the point process $W$ is \emph{hyperuniform}, in the sense introduced by Torquato and Stillinger \cite{torquato_stillinger} . That is, the fluctuations of the number of points that fall inside a nice convex domain grow like the surface area as the volume tends to infinity. In fact, the above growth rate falls into class \RomanNumeralCaps{1} hyperuniform point processes, the two other classes being of faster growth rate but still less than the volume of the convex set. For more details on fluctuations in hyperunifom systems see the survey by Ghosh and Lebowitz \cite{ghosh_lebowitz_survey} for mathematical results and the survey by Torquato \cite{torquato_hyperuniform_states} for the physics point of view.
	
	It is worth mentioning that the assumption on the Gaussian curvature is essential for \eqref{eq:limit_of_variance_in_thm_convex_set} to hold. In \cite{kim_torquato}, Kim and Torquato considered the case where
	\begin{equation}
		\label{eq:balls_in_l_p_norms}
		K = \bB^p \eqdef \left\{x=(x_1,\ldots,x_d) : |x_1|^{p} + \ldots |x_d|^p \leq 1 \right\}
	\end{equation}
	and observed numerically that for $d=2$ the variance of $\mathbf{n}(\bB_R^p)$ grows like $R^\gamma$, where $\gamma=\gamma(p)$ varies continuously between $1$ and $2$ as $p$ varies from $2$ to infinity. It is evident that $\bB^p$ is convex for all $p\ge 2$ and that $\partial \bB^p$ has non-vanishing curvature if and only if $p=2$, which is exactly the case covered by our Theorem \ref{thm:variance_for_convex_sets}.
	
	Finally, we mention the recent work of Adhikari, Ghosh and Lebowitz \cite{fluctuation_kartick} in which they study the asymptotic fluctuations in a certain class of hyperuniform systems, where the points of the process are fixed (non-random) and each point is assigned with a weight governed by an underlying mean-zero random field. Although the model in \cite{fluctuation_kartick} is not directly related to the perturbations of the lattice, they notice that the asymptotic of fluctuations in these processes depend on the shape of the growing domain, and is different when considering large balls as opposed to large cubes (see \cite[Section 1.7.3]{fluctuation_kartick} for more refined details). As already indicated in the introduction, our work highlights a similar phenomena.
	
	\subsection{Theorem \ref{thm:variance_for_convex_sets} and the work of G\'{a}cs and Sz\'{a}sz \cite{gacs_szasz}}
	A simple application of Theorem \ref{thm:variance_linear_statstic} and Fubini yields that
	\[
	\frac{\int_{[0,1]^d}\var\left(\mathbf{n}(K_R+x)\right){\rm d}m_d(x)}{R^{d-1}} = R\int_{\bR^d}|\widehat{\mathbf{1}_K}(\lambda)|^2 \left(1-e^{-2a\pi^2|\lambda|^2/R^2} \right) \dd m_d(\lambda),
	\] 
	see the proof of Theorem $\ref{thm:variance_linear_statstic_stat}$ for the details. Furthermore, in Section \ref{sec:convex_sets_with_smooth_boundary} we prove that
	\begin{align*}
		 &\lim_{R\to\infty} \frac{\int_{[0,1]^d}\var\left(\mathbf{n}(K_R+x)\right){\rm d}m_d(x)}{R^{d-1}} \\ &= \lim_{R\to\infty} R\int_{\bR^d}|\widehat{\mathbf{1}_K}(\lambda)|^2 \left(1-e^{-2a\pi^2|\lambda|^2/R^2} \right) \dd m_d(\lambda) = \sqrt{\frac{a}{2\pi}} \cdot \sigma_{d-1}(\partial K),
	\end{align*}
	provided that $K$ is a compact convex set (see Claim \ref{claim:leading_order_for_var_convex_sets}). The smoothness assumption on $\partial K$ allows us to show that the infinite sum in Theorem $\ref{thm:variance_linear_statstic_stat}$ does not contribute to the leading order asymptotic to $\var\left(\mathbf{n}_{\sf s}(K_R)\right)$. Still, the assumption of the non-vanishing curvature is essential, as in Section \ref{sec:variance_of_number_of_points_inside_large_cube} we show that if $K=[-\frac{1}{2},\frac{1}{2}]^d$ is the $d$-dimensional cube then $\var\left(\mathbf{n}_{\sf s}(K_R)\right)$ can be much larger than $R^{d-1}$. 
	
	In \cite[Theorem~1]{gacs_szasz}, it is proved that the limit
	$$
	\lim_{R\to\infty} \frac{\int_{[0,1]^d}\var\left(\mathbf{n}(K_R+x)\right){\rm d}m_d(x)}{R^{d-1}}
	$$ 
	exists when considering i.i.d. random perturbations of lattice points with almost an arbitrary distribution (see Section \ref{sec:convex_sets_with_smooth_boundary} for more precise details). The main motivation to their paper was a problem posed by D.R. Cox: determine the asymptotic behavior for the variance of number of displaced points (i.i.d. random perturbations of a lattice) contained inside a large convex set. See the introduction in \cite{gacs_szasz} for the exact formulation of Cox's problem. To quote from \cite{gacs_szasz}: \emph{``The question becomes more tractable when replacing the variance with its value averaged over the unit cube.."}. Our Theorem \ref{thm:variance_for_convex_sets} shows that in the case of Gaussian perturbations the extra averaging is not necessary, as long as the boundary has non-vanishing Gaussian curvature, while the example of the unit cube treated in Section \ref{sec:variance_of_number_of_points_inside_large_cube} shows that the curvature assumption is needed for that remark.
	
	\section{Mean of the linear statistics}
	\label{sec:mean_for_linear_statistics}
	
	As usual, we write $e(x) = \exp\left(2\pi i x\right)$. Throughout we write $N(h) = N(h,1)$. Recall that the the \emph{characteristic function} of the Gaussian vector $\xi_0$ is given by
	\begin{equation*}
	\label{eq:d_gaussian_measure_fourier_transform}
	\E\left[e(-\inner{\xi_0}{\lambda})\right] = \widehat{\phi_a}(\lambda) = \exp\left( - a \pi^2 |\lambda|^2\right).
	\end{equation*}
	The convolution of two functions $f,g\in L^1\left(\mathbb{R}^d\right)$ is given by
	\[
	\left(f*g\right)\left(y\right) = \int_{\mathbb{R}^d} f(x) g(y-x) {\rm{d}} m_{d}(x),
	\]
	$f\ast g \in L^1(\bR^d)$ and $\widehat{\left(f*g\right)} = \widehat{f}\cdot \widehat{g}$. Finally, we quote a version of the \emph{Poisson summation formula} which we will use several times. 
	\begin{proposition}[{\cite[Theorem~2.1]{applebaum}}]
		\label{prop:poisson_summation_formula}
		Suppose that $f\in L^1\left(\bR^d\right)$ is continuous such that $\sum_{m\in\bZ^d}|\widehat{f}(m)|<\infty$. Assume further that the \emph{periodization} of $f$ $$(\mathcal{P}f)(x)\eqdef \sum_{n\in \bZ^d}f(x-n)$$ converges absolutely and uniformly for all $x\in \bR^d$, then  
		$$
		\sum_{n\in \bZ^d} f(n) = \sum_{m\in \bZ^d} \widehat{f}(m).
		$$
	\end{proposition}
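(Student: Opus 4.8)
The plan is to realize both sides of the identity as the value at the origin of a single $\bZ^d$-periodic function and of its Fourier series. First I would introduce the periodization $F \eqdef \cP f$. By hypothesis the sum defining it converges absolutely and uniformly on $\bR^d$, and since each translate $f(\cdot - n)$ is continuous, this uniform convergence forces $F$ to be continuous; it is manifestly $\bZ^d$-periodic. Hence $F$ has a well-defined sequence of Fourier coefficients on the fundamental domain $[0,1]^d$ (of unit volume, so no normalizing factor appears),
\[
c_m = \int_{[0,1]^d} F(x)\, e(-\inner{x}{m})\, \dd m_d(x), \qquad m\in\bZ^d.
\]

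Next I would compute these coefficients explicitly. Using the uniform convergence of the periodization to interchange the integral over the bounded cube with the sum over $n\in\bZ^d$, then substituting $y = x-n$ and using $e(-\inner{n}{m}) = 1$ for $m,n\in\bZ^d$, the integrals over the translated cubes $[0,1]^d - n$ tile $\bR^d$ and reassemble (legitimately, as $f\in L^1$) into one integral:
\[
c_m = \sum_{n\in\bZ^d} \int_{[0,1]^d} f(x-n)\, e(-\inner{x}{m})\, \dd m_d(x) = \int_{\bR^d} f(y)\, e(-\inner{y}{m})\, \dd m_d(y) = \widehat{f}(m).
\]
At this point the remaining hypothesis $\sum_{m\in\bZ^d}|\widehat{f}(m)| < \infty$ enters: it says exactly that $\sum_m |c_m| < \infty$, so the trigonometric series $\sum_{m\in\bZ^d}\widehat{f}(m)\,e(\inner{x}{m})$ converges absolutely and uniformly to some continuous $\bZ^d$-periodic function $G$.

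The heart of the argument, and the step I expect to be the main (though classical) obstacle, is to show $F \equiv G$, since pointwise convergence of a Fourier series is delicate in general. I would bypass any general convergence theorem by appealing to uniqueness instead: the uniform convergence of the series permits term-by-term integration of $G$ against $e(-\inner{x}{m})$ over $[0,1]^d$, which shows that $G$ has Fourier coefficients $\widehat{f}(m) = c_m$, identical to those of $F$. Two continuous $\bZ^d$-periodic functions with the same Fourier coefficients coincide, so $F \equiv G$, and in particular $F(x) = \sum_{m\in\bZ^d}\widehat{f}(m)\,e(\inner{x}{m})$ for every $x\in\bR^d$. (Alternatively, one may invoke Fej\'er's theorem for $F$ together with the fact that a convergent series is Ces\`aro summable to its sum, reaching the same conclusion via the continuity of $F$.)

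Finally I would evaluate this identity at $x=0$. The left-hand side is $F(0) = \sum_{n\in\bZ^d} f(-n) = \sum_{n\in\bZ^d} f(n)$, the last equality by the reindexing $n \mapsto -n$, a bijection of $\bZ^d$; the right-hand side is $\sum_{m\in\bZ^d}\widehat{f}(m)$. This is precisely the asserted Poisson summation formula.
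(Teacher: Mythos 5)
Your proof is correct and complete. Note that the paper does not actually prove this proposition; it is imported verbatim from Applebaum's paper (cited as Theorem~2.1 there), so there is no internal argument to compare against. What you give is the classical periodization proof: the uniform convergence hypothesis makes $F=\cP f$ continuous and periodic, the $L^1$ hypothesis lets the translated cubes reassemble to show the $m$-th Fourier coefficient of $F$ is $\widehat{f}(m)$, the summability hypothesis $\sum_m|\widehat{f}(m)|<\infty$ gives uniform convergence of the Fourier series, and uniqueness of Fourier coefficients for continuous periodic functions (via density of trigonometric polynomials, or Fej\'er) identifies $F$ with its Fourier series pointwise; evaluating at the origin finishes the argument. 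Each of the three hypotheses is used exactly where it is needed, and the interchanges of sum and integral are all justified as you state.
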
	
	\begin{lemma}
		\label{lemma:mean_linear_statstic}
		For any $h\in\left(L^1\cap L^2\right)\left(\mathbb{R}^d\right)$ we have,
		\begin{equation*}
		\label{eq:mean_linear_statistic}
		\E\left[N(h,R)\right] = R^d\sum_{m\in \bZ^d} e^{-a\pi^2|m|^2} \widehat{h}(Rm) .
		\end{equation*}
	\end{lemma}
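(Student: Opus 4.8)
The plan is to compute the expectation term by term and then apply the Poisson summation formula (Proposition \ref{prop:poisson_summation_formula}). Writing the sum over the lattice and using that each $\xi_n$ has density $\phi_a$, I would first record that
\[
\E\left[N(h,R)\right] = \sum_{n\in\bZ^d} \E\left[h\left(\frac{n+\xi_n}{R}\right)\right] = \sum_{n\in\bZ^d} \int_{\bR^d} h\left(\frac{n+y}{R}\right)\phi_a(y)\,\dd m_d(y).
\]
After the substitution $u=n+y$ and using that $\phi_a$ is even, each summand equals $(h_R*\phi_a)(n)$, where $h_R(x)\eqdef h(x/R)$; thus $\E[N(h,R)] = \sum_{n\in\bZ^d}(h_R*\phi_a)(n)$. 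The interchange of expectation and summation is justified by Tonelli's theorem once I check that $\sum_{n}\E[|h((n+\xi_n)/R)|]<\infty$, which I address below.

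Next I would apply Proposition \ref{prop:poisson_summation_formula} to the function $F\eqdef h_R*\phi_a$. Since $\widehat{F}=\widehat{h_R}\cdot\widehat{\phi_a}$, the scaling relation $\widehat{h_R}(\lambda)=R^d\widehat{h}(R\lambda)$ together with the given identity $\widehat{\phi_a}(\lambda)=e^{-a\pi^2|\lambda|^2}$ gives
\[
\widehat{F}(m) = R^d\,\widehat{h}(Rm)\,e^{-a\pi^2|m|^2}.
\]
Poisson summation then yields
\[
\E\left[N(h,R)\right] = \sum_{n\in\bZ^d}F(n) = \sum_{m\in\bZ^d}\widehat{F}(m) = R^d\sum_{m\in\bZ^d}e^{-a\pi^2|m|^2}\widehat{h}(Rm),
\]
which is the desired formula.

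The only point requiring genuine care is the verification of the hypotheses of Proposition \ref{prop:poisson_summation_formula} for $F=h_R*\phi_a$, and this is where I expect the (routine) work to lie. That $F\in L^1(\bR^d)$ follows from Young's inequality since $h_R,\phi_a\in L^1$, and $F$ is continuous (indeed bounded) because $\phi_a$ is bounded and continuous while $h_R\in L^1$. For the summability $\sum_m|\widehat{F}(m)|<\infty$, I would use $\|\widehat{h}\|_\infty\le\|h\|_{1}$ to bound $|\widehat{F}(m)|\le R^d\|h\|_1\, e^{-a\pi^2|m|^2}$, so that the Gaussian decay makes the lattice sum converge. The remaining hypothesis, absolute and uniform convergence of the periodization $(\mathcal{P}F)(x)=\sum_n F(x-n)$, is the crux: I would estimate
\[
\sum_{n\in\bZ^d}\bigl|F(x-n)\bigr| \le \int_{\bR^d}|h_R(u)|\sum_{n\in\bZ^d}\phi_a(x-n-u)\,\dd m_d(u) \le \|\mathcal{P}\phi_a\|_\infty\,\|h_R\|_1,
\]
using that the periodized Gaussian $\mathcal{P}\phi_a$ is a bounded function. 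This bound is uniform in $x$, and the rapid decay of $\phi_a$ lets one make the tails of the series uniformly small, giving absolute and uniform convergence. Finally, taking $x=0$ in the same estimate bounds $\sum_n(|h_R|*\phi_a)(n)$, which is precisely the summability needed to justify the Tonelli interchange in the first step.
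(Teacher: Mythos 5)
Your proposal is correct and follows essentially the same route as the paper: identify $\E[h(n+\xi_n)]$ with the Gaussian convolution $(h*\phi_a)(n)$, bound the periodization by the periodized Gaussian to justify Poisson summation, and read off the Fourier coefficients $\widehat{h*\phi_a}(m)=e^{-a\pi^2|m|^2}\widehat{h}(m)$. The only cosmetic difference is that you carry the parameter $R$ throughout rather than reducing to $R=1$ by the scaling relation, and you explicitly check the hypothesis $\sum_m|\widehat{F}(m)|<\infty$, which the paper leaves implicit.
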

	\begin{proof}
		By the scaling relation $\widehat{h(\cdot/R)} = R^d \widehat{h}(R\cdot)$ it is enough to show the equality holds only for the case $R=1$. We Set $\ti{h} = h\ast \phi_a$ where $\phi_a$ is the Gaussian function (\ref{eq:d_gaussian_measure}). Clearly $\ti{h}\in \left(L^1\cap C^\infty\right)(\bR^d)$, so in order to apply Proposition \ref{prop:poisson_summation_formula} we are left to show that the periodization converges. Indeed, we may apply the dominated convergence theorem as
		\begin{align*}
		\left|(\mathcal{P}\ti{h})(x)\right| &\leq \sum_{n\in \bZ^d} \left\{ \int_{\mathbb{R}^d} \left|h(z)\right|\phi_a(x-n-z) {\rm d}m_d(z) \right\} \\  &=  \int_{\mathbb{R}^d}|h(z)| \left\{\sum_{n\in \bZ^d}\phi_a(x-z-n) \right\} {\rm d}m_d(z) \lesssim \int_{\mathbb{R}^d} \left|h(z)\right|{\rm d}m_d(z).
		\end{align*}
		It remains to observe that for all $n\in \bZ^d$,
		$$
		\E\left[h(n+ \xi_n)\right] = \int_{\mathbb{R}^d} h(n+x) \phi_a(x) {\rm d} m_d(x) = \ti{h}(n).
		$$ 
		Altogether, we apply Poisson summation formula and Fubini to get get
		\begin{align*}
			\E\left[N(h)\right] &= \sum_{n\in \bZ^d} \ti{h}(n) \\ &= \sum_{m\in \bZ^d} \widehat{\left(h\ast\phi_a\right)}(m) = \sum_{m\in \bZ^d} e^{-a\pi^2|m|^2}\widehat{h}(m).
		\end{align*}
	\end{proof}
	
	\begin{corollary}
	\label{cor:mean_linear_statistics_first_order}
	For any $h\in\left(L^1\cap L^2\right)\left(\mathbb{R}^d\right)$ we have
	\begin{equation*}
	\lim_{R\rightarrow\infty} \frac{\E\left[N(h,R)\right]}{R^d} = \int_{\mathbb{R}^d} h \, {\rm d}m_d.
	\end{equation*}
	\end{corollary}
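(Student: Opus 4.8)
The plan is to read off the exact formula from Lemma \ref{lemma:mean_linear_statstic} and combine it with two standard facts about the Fourier transform of an $L^1$ function: the uniform bound $\|\widehat{h}\|_\infty \le \|h\|_{L^1}$ and the Riemann-Lebesgue lemma. Dividing the conclusion of Lemma \ref{lemma:mean_linear_statstic} by $R^d$ gives
\[
\frac{\E\left[N(h,R)\right]}{R^d} = \sum_{m\in \bZ^d} e^{-a\pi^2|m|^2}\,\widehat{h}(Rm),
\]
and I would isolate the term $m=0$, which equals $\widehat{h}(0) = \int_{\bR^d} h \, \dd m_d$ and is precisely the claimed limit. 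The entire task then reduces to showing that the remaining sum over $m\neq 0$ tends to $0$ as $R\to\infty$.

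To pass the limit inside the infinite sum, I would invoke dominated convergence for series. Since $h\in L^1(\bR^d)$, we have $|\widehat{h}(\lambda)|\le \|h\|_{L^1}$ for every $\lambda\in\bR^d$, so each summand is dominated in absolute value by $e^{-a\pi^2|m|^2}\|h\|_{L^1}$; because $\sum_{m\in\bZ^d} e^{-a\pi^2|m|^2}<\infty$ (a convergent theta-type series), this furnishes a summable majorant that is independent of $R$. Hence I may interchange $\lim_{R\to\infty}$ with the summation over $m$.

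It then remains to compute the termwise limits. For $m=0$ the summand is the constant $\widehat{h}(0)$; for each fixed $m\neq 0$ we have $|Rm|\to\infty$ as $R\to\infty$, so the Riemann-Lebesgue lemma forces $\widehat{h}(Rm)\to 0$. Every term with $m\neq 0$ therefore contributes nothing in the limit, leaving exactly $\widehat{h}(0)=\int_{\bR^d} h \, \dd m_d$, as desired. The only step warranting care is the interchange of the limit and the infinite sum, but this is immediate from the uniform bound $\|\widehat{h}\|_\infty\le\|h\|_{L^1}$ together with the Gaussian decay of the coefficients, so I do not anticipate any genuine obstacle.
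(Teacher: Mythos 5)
Your argument is correct and is exactly the paper's proof, merely written out in full: the paper's one-line justification cites Lemma \ref{lemma:mean_linear_statstic}, the Riemann--Lebesgue lemma, and dominated convergence, which are precisely the three ingredients you assemble. The majorant $e^{-a\pi^2|m|^2}\|h\|_{L^1}$ you supply is the right one for the dominated convergence step.
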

	\begin{proof}
		Follows immediately from Lemma \ref{lemma:mean_linear_statstic} combined with the Riemann-Lebesgue lemma \cite[Proposition~2.2.17]{grafakos} and the dominated convergence theorem.
	\end{proof}
	 Another simple consequence of Lemma \ref{lemma:mean_linear_statstic} is a formula for the mean in the translation invariant case.
	\begin{corollary}
		\label{cor:mean_linear_statistics_stat}
		For all $h\in \left(L^1\cap L^2\right)\left(\bR^d\right)$ we have
		\[
		\E\left[N_{\sf s}(h,R)\right] = R^d \int_{\bR^d} h \, {\rm d}m_d.
		\]
	\end{corollary}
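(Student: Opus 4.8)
The plan is to condition on the uniform shift $\zeta$ and thereby reduce everything to Lemma \ref{lemma:mean_linear_statstic}. Fix a value $\zeta = z \in [0,1]^d$. Since the Gaussians $\{\xi_n\}$ are independent of $\zeta$, conditionally on $\zeta = z$ the stationary statistic is literally
\[
\sum_{n\in\bZ^d} h\Bigl(\frac{n+\xi_n+z}{R}\Bigr) = \sum_{n\in\bZ^d} h_z\Bigl(\frac{n+\xi_n}{R}\Bigr) = N(h_z,R),
\]
where $h_z(x) \eqdef h(x+z/R)$ and the right-hand side uses the same Gaussians as the non-stationary process $W$. Translation preserves the $L^1$ and $L^2$ norms, so $h_z \in (L^1\cap L^2)(\bR^d)$ for every $z$, and the tower property gives
\[
\E\left[N_{\sf s}(h,R)\right] = \int_{[0,1]^d} \E\left[N(h_z,R)\right] \, \dd m_d(z).
\]

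Next I would apply Lemma \ref{lemma:mean_linear_statstic} to the translated test function $h_z$. A direct change of variables yields the translation rule $\widehat{h_z}(\lambda) = e(\inner{z/R}{\lambda})\,\widehat{h}(\lambda)$, and since $\inner{z/R}{Rm} = \inner{z}{m}$, the lemma gives
\[
\E\left[N(h_z,R)\right] = R^d \sum_{m\in\bZ^d} e^{-a\pi^2|m|^2} e(\inner{z}{m}) \widehat{h}(Rm).
\]

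Finally I would integrate in $z$ over the cube $[0,1]^d$ and use orthogonality of characters. Because $h\in L^1$ forces $|\widehat{h}(Rm)| \le \|h\|_{L^1}$, the bound $\sum_{m} e^{-a\pi^2|m|^2}|\widehat{h}(Rm)| \le \|h\|_{L^1}\sum_{m} e^{-a\pi^2|m|^2} < \infty$ holds uniformly in $z$, which justifies exchanging the sum and the integral by Fubini. The resulting inner integrals are the standard character integrals
\[
\int_{[0,1]^d} e(\inner{z}{m}) \, \dd m_d(z) = \prod_{j=1}^d \int_0^1 e^{2\pi i m_j z_j} \, \dd z_j = \indf{m=0},
\]
so only the $m=0$ term survives, leaving $\E[N_{\sf s}(h,R)] = R^d \widehat{h}(0) = R^d \int_{\bR^d} h \, \dd m_d$. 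The corollary is thus essentially immediate once one conditions on $\zeta$; the only point requiring any care is the interchange of summation and integration, but the uniform bound above makes this routine, so I expect no genuine obstacle here beyond this bookkeeping.
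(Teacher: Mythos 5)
Your proposal is correct and follows essentially the same route as the paper: condition on $\zeta$, apply Lemma \ref{lemma:mean_linear_statstic} to the translated test function via the Fourier translation rule, interchange sum and integral using the uniform bound $|\widehat{h}|\le\|h\|_{L^1}$, and finish with orthogonality of characters. The only cosmetic difference is that the paper first reduces to $R=1$ by scaling, whereas you carry the general $R$ through with $h_z(x)=h(x+z/R)$.
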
 
	\begin{proof}
		Again by scaling we may prove only for $R=1$. Set $N_{\sf s}(h) = N_{\sf s}(h,1)$. Using that $\widehat{h(\cdot + \zeta)} = e(\inner{\cdot}{\zeta}) \widehat{h}(\cdot)$ we can apply the law of total expectation (see \cite[eq.~(4.1.5)]{durrett}) and observe that
		\begin{align*}
			\E\left[N_{\sf s}(h)\right] &= \E\left[\E\left[N_{\sf s}(h)\mid \zeta \right]\right] \\ &= \E\left[\sum_{m\in \bZ^d} e^{-a\pi^2|m|^2} e(\inner{m}{\zeta}) \widehat{h}(m) \right] \\ &= \sum_{m\in \bZ^d}  e^{-a\pi^2|m|^2} \widehat{h}(m) \E\left[e(\inner{m}{\zeta})\right]
		\end{align*}
		where the exchange of sum and expectation is valid since the sum is uniformly and absolutely convergent. It remains to observe that
		\begin{equation}
			\label{eq:char_function_of_uniform_variable_on_lattice_pts}
			\E\left[e(\inner{m}{\zeta})\right] = \int_{[0,1]^d} e(\inner{m}{x}) \dd m_d(x) = \begin{cases}
			1 & m=0, \\ 0 & m\in \bZ^d\setminus\{0\}.
			\end{cases}
		\end{equation}
	\end{proof}
	Suppose now that $K\subset \mathbb{R}^d$ is a compact convex set such that $\partial K$ is a smooth closed manifold with nowhere vanishing Gaussian curvature. We have the following bound on the decay of the Fourier transform of $\mathbf{1}_K$, given as
	\begin{equation}
	\label{eq:bound_on_fourier_transform_of_smooth_set}
	|\widehat{\mathbf{1}_K}(\lambda)| \lesssim \left(1+|\lambda|\right)^{-(d+1)/2}.
	\end{equation} 
	Here the implicit constant depends only on the Gaussian curvature of $\partial K$, see \cite[Corollary~7.7.15]{hormander}. In fact, (\ref{eq:bound_on_fourier_transform_of_smooth_set}) is a consequence of the more general bound (\ref{eq:bound_on_fourier_transfom_of_manifold}) which we use in Section \ref{sec:convex_sets_with_smooth_boundary}. For such sets $K$, we give an upper bound on the remainder term in Corollary \ref{cor:mean_linear_statistics_first_order}.
		\begin{lemma}
		\label{lemma:remainder_term_in_the_mean_convex_sets}
		Suppose that $K\subset \mathbb{R}^d$ is a compact convex set such that $\partial K$ is a smooth closed manifold with nowhere vanishing Gaussian curvature. Then
		\[
		\left|\E\left[\mathbf{n}\left(K_R\right)\right] - m_d(K) R^d\right| \lesssim R^{(d-1)/2}
		\] 
	\end{lemma}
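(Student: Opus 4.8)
The plan is to combine the exact formula for the mean from Lemma \ref{lemma:mean_linear_statstic} with the curvature-driven Fourier decay estimate (\ref{eq:bound_on_fourier_transform_of_smooth_set}). First I would note that $\mathbf{n}(K_R) = N(\mathbf{1}_K, R)$, so applying Lemma \ref{lemma:mean_linear_statstic} with the test function $h = \mathbf{1}_K$ gives
\[
\E\left[\mathbf{n}(K_R)\right] = R^d \sum_{m\in\bZ^d} e^{-a\pi^2|m|^2}\, \widehat{\mathbf{1}_K}(Rm).
\]
The term $m=0$ contributes exactly $R^d \widehat{\mathbf{1}_K}(0) = R^d m_d(K)$, since $\widehat{\mathbf{1}_K}(0) = \int_{\bR^d}\mathbf{1}_K\,{\rm d}m_d = m_d(K)$. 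Subtracting this main term isolates the remainder as the sum running over $m\in\bZ^d\setminus\{0\}$.

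Next I would estimate the remainder by the triangle inequality, feeding in (\ref{eq:bound_on_fourier_transform_of_smooth_set}). For every $m\ne 0$ we have $|m|\ge 1$, hence $|\widehat{\mathbf{1}_K}(Rm)| \lesssim (1+R|m|)^{-(d+1)/2} \le R^{-(d+1)/2}|m|^{-(d+1)/2}$. This yields
\[
\left|\E\left[\mathbf{n}(K_R)\right] - m_d(K)R^d\right| \lesssim R^{\,d-(d+1)/2}\sum_{m\in\bZ^d\setminus\{0\}} e^{-a\pi^2|m|^2}|m|^{-(d+1)/2}.
\]
Since $d - (d+1)/2 = (d-1)/2$ and the Gaussian factor makes the series converge absolutely (with or without the polynomial factor), the right-hand side is $\lesssim R^{(d-1)/2}$, which is the desired bound.

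There is no genuinely hard step here: the entire argument reduces to the convergence of a single Gaussian-weighted series. The one input carrying all the geometric content is the decay bound (\ref{eq:bound_on_fourier_transform_of_smooth_set}), whose exponent $(d+1)/2$ is precisely what produces the power $R^{(d-1)/2}$. It is worth emphasizing that the Gaussian weight $e^{-a\pi^2|m|^2}$ alone already guarantees convergence of the series, so the role of the non-vanishing curvature hypothesis is purely to supply the gain of $R^{-(d+1)/2}$ coming from each single frequency $Rm$; without that hypothesis the Fourier transform of $\mathbf{1}_K$ decays more slowly (as the cube example treated later confirms), which would degrade the exponent in the remainder.
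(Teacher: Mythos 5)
Your proof is correct and follows essentially the same route as the paper: apply Lemma \ref{lemma:mean_linear_statstic} with $h=\mathbf{1}_K$, peel off the $m=0$ term, and bound the tail using the curvature-driven decay (\ref{eq:bound_on_fourier_transform_of_smooth_set}) together with the convergence of $\sum_{m\ne 0} e^{-a\pi^2|m|^2}|m|^{-(d+1)/2}$. No gaps; your closing remark on where the curvature hypothesis enters is accurate and consistent with the paper's cube example.
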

	
	\begin{proof}
		Combining Lemma \ref{lemma:mean_linear_statstic} with the bound (\ref{eq:bound_on_fourier_transform_of_smooth_set}) we get that
		\begin{align*}
			\left|\E\left[\mathbf{n}\left(K_R\right)\right] - m_d(K) R^d\right| & \leq R^d \sum_{m\in \bZ^d \setminus \{0\}} e^{-a\pi^2|m|^2} |\widehat{\mathbf{1}_K}(Rm)| \\ & \lesssim R^{(d-1)/2} \sum_{m\in \bZ^d \setminus \{0\}} e^{-a\pi^2|m|^2} |m|^{-(d+1)/2}
		\end{align*}
		and the lemma follows as $\sum_{m\in \bZ^d \setminus \{0\}} e^{-a\pi^2|m|^2} |m|^{-(d+1)/2} < \infty$ for all $a>0$.
	\end{proof}
	\subsection{Mean number of points from $W$ inside a large cube}
	\label{sec:mean_number_of_points_in_large_cube}
	We give a simple example to show that in Lemma \ref{lemma:remainder_term_in_the_mean_convex_sets} the assumption on the Gaussian curvature is necessary. We will do so by examining the case $Q \eqdef [-\frac{1}{2},\frac{1}{2}]^d$. By setting $\lambda=\left(\lambda_1,\ldots,\lambda_d\right)$ we can compute the Fourier transform of $\mathbf{1}_{Q}$ as
	\begin{equation}
	\label{eq:fourier_transform_of_cube}
	\widehat{\mathbf{1}_{Q}} (\lambda) = \int_{[-\frac{1}{2},\frac{1}{2}]^d} e\left(-\inner{\lambda}{x}\right) \dd m_d(x) = \prod_{j=1}^{d} \text{sinc}(\pi \lambda_j), 
	\end{equation}
	where,
	\[
	\text{sinc}(x) \eqdef \begin{cases}
	\left(\sin x\right)/x & x\not= 0, \\ 1 &x=0.
	\end{cases}
	\]
	It follows from Lemma \ref{lemma:mean_linear_statstic} that
	\begin{align}
		\label{eq:mean_number_of_points_in_large_cube_sum}
		\E\left[\mathbf{n}(Q_R)\right] &= R^d \sum_{m\in \bZ^d} e^{-a\pi^2|m|^2} \left(\prod_{j=1}^{d} \text{sinc}(\pi R m_j)\right).
	\end{align}
	We split the sum in (\ref{eq:mean_number_of_points_in_large_cube_sum}) according to the number of zero entries in the vector $m = (m_1,\ldots,m_d)\in \bZ^d$ and obtain that
	\begin{align*}
		\E\left[\mathbf{n}(Q_R)\right] &= R^d + R^d\Bigg(\sum_{\#\{j : \, m_j\not= 0\} = 1} e^{-a\pi^2|m|^2} \prod_{j=1}^{d} \text{sinc}(\pi R m_j) \Bigg) + \mathcal{O}\left(R^{d-2}\right) \\ &=R^d + R^{d-1} \left(2d  \sum_{\ell=1}^{\infty} e^{-a\pi^2 \ell^2} \frac{\sin(\pi R\ell)}{\pi\ell}\right) + \mathcal{O}\left(R^{d-2}\right).
	\end{align*}
	By looking at a subsequence of $R = j+\frac{1}{2}$ for $j\in \bZ_{\ge 0}$ we immediately get that
	\[
	\limsup_{R\to \infty} \frac{\E\left[\mathbf{n}(Q_R)\right] - R^d}{R^{d-1}} > 0.
	\]
	\section{Fluctuations of linear statistics}
	\label{sec:fluctuations_of_linear_statistics}
	\begin{proof}[Proof of Theorem \ref{thm:variance_linear_statstic}]
		We first prove the equality for $R=1$. Denote for the moment $\phi = \phi_a$ and set $H(z) = \var\left(h(z+\xi_0)\right)$. Since the $\xi_n$'s are independent we have
		\[
		\var\left(N(h)\right) = \sum_{n\in \bZ^d} H(n).
		\]
		Recall that $\ti{h} = h\ast \phi$. By the definition of the variance,
		\begin{align*}
		H(x) &= \E\left[h^2(x+\xi_0)\right] - \left(\E\left[h(x+\xi_0)\right]\right)^2 \\ &= \left(h^2\ast \phi\right)(x) - \left(h*\phi\right)^2(x) = \ti{h^2}(x) - (\ti{h})^2(x).
		\end{align*}
		The Cauchy-Schwarz inequality combined with our assumption $h\in L^2\left(\mathbb{R}^d\right)$ implies that $H\in \left(L^1\cap C^\infty\right)\left(\mathbb{R}^d\right)$. As we wish to apply Proposition \ref{prop:poisson_summation_formula}, we need to give a uniform bound for the periodization. By repeating the same argument as in the proof of Lemma \ref{lemma:mean_linear_statstic} we see that $(\mathcal{P}\ti{h^2})(x)$ converges absolutely and uniformly for all $x\in \bR^d$. For the second term, observe that
		\begin{align*}
			\left|(\mathcal{P}(\ti{h})^2)(x)\right| \leq \iint_{\bR^d\times \bR^d} |h(z)h(w)|&\left\{\sum_{n\in\bZ^d} \phi(x-n-z)\phi(x-n-w)\right\} \dd m_d(z) \dd m_d(w) \\ &\lesssim \left(\int_{\bR^d} |h(z)| \dd m_d(z)\right)^2,
		\end{align*} 
		and so, by the dominated convergence theorem, $(\mathcal{P}H)(x)$ is absolutely and uniformly convergent for all $x\in \bR^d$. Finally, the Fourier transform of $H$ is given by
		\begin{align*}
		\widehat{H}(\eta) &= \left(\widehat{h^2}\widehat{\phi}\right) (\eta) - \left[\left(\widehat{h}\widehat{\phi}\right)\ast\left(\widehat{h}\widehat{\phi}\right)\right] (\eta) \\ & = \left[\left(\widehat{h}\ast \widehat{h}\right) \widehat{\phi}\right](\eta) - \left[\left(\widehat{h}\widehat{\phi}\right)\ast\left(\widehat{h}\widehat{\phi}\right)\right] (\eta) \\ & =\int_{\mathbb{R}^d}\widehat{h}(\lambda)\widehat{h}(\eta-\lambda)\left(e^{-a\pi^2|\eta|^2} - e^{-a\pi^2|\lambda|^2}e^{-a\pi^2|\eta-\lambda|^2}\right){\rm d} m_{d}(\lambda),
		\end{align*}
		and by Proposition \ref{prop:poisson_summation_formula} we get
		\begin{align*}
		\var\left(N(h)\right) &= \sum_{n\in \bZ^d} H(n) =  \sum_{m\in \bZ^d} \widehat{H}(m)\\ &= \sum_{m\in \bZ^d} \left\{\int_{\mathbb{R}^d}\widehat{h}(\lambda)\widehat{h}(m-\lambda)\left(e^{-a\pi^2|m|^2} - e^{-a\pi^2|\lambda|^2}e^{-a\pi^2|m-\lambda|^2}\right){\rm d} m_{d}(\lambda)\right\}.
		\end{align*}
		To get the result for general $R$ we use the scaling property of the Fourier transform and a change of variables $\mu=R\lambda$. 
%		to get
%		\begin{align*}
%			\var&\left(N(h,R)\right) \\ &= R^{2d}\sum_{m\in \bZ^d} \left\{\int_{\mathbb{R}^d}\widehat{h}(R\lambda)\widehat{h}(Rm-R\lambda)\left(e^{-a\pi^2|m|^2} - e^{-a\pi^2|\lambda|^2}e^{-a\pi^2|m-\lambda|^2}\right){\rm d} m_{d}(\lambda)\right\} \\ &= R^{d}\sum_{m\in \bZ^d} \left\{\int_{\mathbb{R}^d}\widehat{h}(\mu)\widehat{h}(Rm-\mu)\left(e^{-a\pi^2|m|^2} - e^{-a\pi^2|\mu|^2/R^2}e^{-a\pi^2|Rm-
%				\mu|^2/R^2}\right){\rm d} m_{d}(\mu)\right\}.
%		\end{align*}
	\end{proof}
	As a first corollary of Theorem \ref{thm:variance_linear_statstic}, we prove an upper bound on $\var\left(N(h,R)\right)$ valid for all test functions $h\in (L^1\cap L^2)(\mathbb{R}^d)$.
	\begin{proof}[Proof of Theorem \ref{thm:upper_bound_for_variance}]
		Throughout this proof we denote by $C_a>0$ a constant that depends only on the parameter $a>0$ (and may change from line to line). Using the scaling relation $\widehat{h(\cdot/R)} = R^d \widehat{h}(R\cdot)$ and the change of variables $\mu=R\lambda$, it is enough to prove that
		\begin{equation*}
			\var\left(N(h)\right) \leq C_a\left(\int_{|\lambda|\leq 1} |\widehat{h}(\lambda)|^2|\lambda|^2 {\rm d} m_d(\lambda) + \int_{|\lambda|\ge 1} |\widehat{h}(\lambda)|^2 {\rm d} m_d(\lambda) \right).
		\end{equation*}
		For every point $m\in \mathbb{Z}^d$, we put
		\begin{equation}
			\label{eq:def_of_A_m_h_thm_3}
			A_m(h) = \int_{\mathbb{R}^d} \widehat{h}(\lambda)\widehat{h}(m-\lambda)\left(e^{-a\pi^2|m|^2}-e^{-a\pi^2|\lambda|^2-a\pi^2|m-\lambda|^2}\right) {\rm d}m_d(\lambda).
		\end{equation}
		By the inequality $1-e^{-x} \leq \min\{x,2\}$ for $x>0$, we can bound the term $m=0$ by
		\[
		A_0(h) \leq 2a\pi^2\int_{|\lambda|\leq 1} |\widehat{h}(\lambda)|^2|\lambda|^2{\rm d}m_d(\lambda) + 2\int_{|\lambda|>1}|\widehat{h}(\lambda)|^2{\rm d}m_d(\lambda). 
		\]
		Fix $m\in \mathbb{Z}^d\setminus\{0\}$. We split (\ref{eq:def_of_A_m_h_thm_3}) into three parts,
		\begin{equation}
			\label{eq:variance_upper_bound_split_into_3}
			A_m(h)=\left(\int_{\RomanNumeralCaps{1}} + \int_{\RomanNumeralCaps{2}} + \int_{\RomanNumeralCaps{3}}\right) \widehat{h}(\lambda)\widehat{h}(m-\lambda)\left(e^{-a\pi^2|m|^2}-e^{-a\pi^2|\lambda|^2-a\pi^2|m-\lambda|^2}\right) {\rm d}m_d(\lambda)
		\end{equation}
		where,
		\[
		{\RomanNumeralCaps{1}} = \left\{|\lambda|\leq 1/2 \right\}, \quad {\RomanNumeralCaps{2}} = \left\{|\lambda-m|\leq 1/2 \right\}, \quad  {\RomanNumeralCaps{3}} = \mathbb{R}^d\setminus\left({\RomanNumeralCaps{1}}\cup{ \RomanNumeralCaps{2}}\right).
		\]
		Turning to bound the first integral in (\ref{eq:variance_upper_bound_split_into_3}), we use again $1-e^{-x}\leq x$ to get,
		\begin{align*}
			\Big|\int_{\RomanNumeralCaps{1}} \widehat{h}(\lambda)\widehat{h}(m-\lambda)&\left(e^{-a\pi^2|m|^2}-e^{-a\pi^2|\lambda|^2-a\pi^2|m-\lambda|^2}\right) {\rm d}m_d(\lambda)\Big| \\ &\leq e^{-a\pi^2|m|^2}\int_{|\lambda|\leq 1/2} |\widehat{h}(\lambda)\widehat{h}(m-\lambda)| \left|1-e^{-2a\pi^2\left(|\lambda|^2+\inner{m}{\lambda}\right)}\right|{\rm d}m_d(\lambda) \\ &\leq C_a e^{-a\pi^2|m|^2}\int_{|\lambda|\leq 1/2} |\widehat{h}(\lambda)\widehat{h}(m-\lambda)|\left(|\lambda|^2 + \inner{\lambda}{m}\right) {\rm d}m_d(\lambda) \\ &\leq C_a e^{-a\pi^2|m|^2} |m| \int_{|\lambda|\leq 1/2} |\widehat{h}(\lambda)\widehat{h}(m-\lambda)| |\lambda| {\rm d}m_d(\lambda).
		\end{align*}
		Continuing, by the Cauchy-Schwarz inequality,
		\begin{align}
			\label{eq:variance_upper_bound_split_into_3_1}
			\Big|&\int_{\RomanNumeralCaps{1}} \widehat{h}(\lambda)\widehat{h}(m-\lambda)\left(e^{-a\pi^2|m|^2}-e^{-a\pi^2|\lambda|^2-a\pi^2|m-\lambda|^2}\right) {\rm d}m_d(\lambda)\Big|  \\ &\leq C_a e^{-a\pi^2|m|^2} |m| \left(\int_{|\lambda|\leq 1/2} |\widehat{h}(\lambda)|^2 |\lambda|^2 {\rm d}m_d(\lambda)\right)^{1/2} \left(\int_{|\lambda|\leq 1/2}|\widehat{h}(m-\lambda)|^2 {\rm d}m_d(\lambda)\right)^{1/2} \nonumber \\ &\leq C_a e^{-a\pi^2|m|^2} |m| \left(\int_{|\lambda|\leq 1/2} |\widehat{h}(\lambda)|^2 |\lambda|^2 {\rm d}m_d(\lambda) + \int_{|\lambda|> 1/2} |\widehat{h}(\lambda)|^2 {\rm d}m_d(\lambda) \right) \nonumber
		\end{align}
		where in the last inequality we used the fact that $|m|\ge 1$. By the change of variables $\mu=\lambda + m $ we see that the first and second integral in (\ref{eq:variance_upper_bound_split_into_3}) are equal, and we obtain that
		\begin{align}
			\label{eq:variance_upper_bound_split_into_3_2}
			\Big|&\int_{\RomanNumeralCaps{2}} \widehat{h}(\lambda)\widehat{h}(m-\lambda)\left(e^{-a\pi^2|m|^2}-e^{-a\pi^2|\lambda|^2-a\pi^2|m-\lambda|^2}\right) {\rm d}m_d(\lambda)\Big| \\ &\leq C_a e^{-a\pi^2|m|^2} |m| \left(\int_{|\lambda|\leq 1/2} |\widehat{h}(\lambda)|^2 |\lambda|^2 {\rm d}m_d(\lambda) + \int_{|\lambda|> 1/2} |\widehat{h}(\lambda)|^2 {\rm d}m_d(\lambda) \right). \nonumber
		\end{align} 
		It remains to bound the integral over the domain $\RomanNumeralCaps{3}$. We use the Cauchy-Schwarz inequality once more,
		\begin{align}
			\label{eq:variance_upper_bound_split_into_3_3}
			\Big|\int_{\RomanNumeralCaps{3}} &\widehat{h}(\lambda)\widehat{h}(m-\lambda)\left(e^{-a\pi^2|m|^2}-e^{-a\pi^2|\lambda|^2-a\pi^2|m-\lambda|^2}\right) {\rm d}m_d(\lambda)\Big| \\ &\leq 2e^{-a\pi^2|m|^2/2} \int_{\RomanNumeralCaps{3}} |\widehat{h}(\lambda)\widehat{h}(m-\lambda)| {\rm d}m_d(\lambda) \nonumber \\ & \leq 2e^{-a\pi^2|m|^2/2} \int_{|\lambda|> 1/2} |\widehat{h}(\lambda)|^2 {\rm d}m_d(\lambda). \nonumber
		\end{align}
		Plugging (\ref{eq:variance_upper_bound_split_into_3_1}), (\ref{eq:variance_upper_bound_split_into_3_2}) and (\ref{eq:variance_upper_bound_split_into_3_3}) into relation (\ref{eq:variance_upper_bound_split_into_3}) yields the upper bound
		\[
		|A_m(h)| \leq C_a e^{-a\pi^2|m|^2/2}|m| \left(\int_{|\lambda|\leq 1/2} |\widehat{h}(\lambda)|^2 |\lambda|^2 {\rm d}m_d(\lambda) + \int_{|\lambda|> 1/2} |\widehat{h}(\lambda)|^2 {\rm d}m_d(\lambda) \right)
		\]
		which, together with Theorem \ref{thm:variance_linear_statstic} implies that
		\begin{align*}
			\var\left(N(h)\right) &= \sum_{m\in \mathbb{Z}^d} A_m(h) \\ &\leq C_a \sum_{m\in \mathbb{Z}^d} e^{-a\pi^2|m|^2/2}|m| \left(\int_{|\lambda|\leq 1/2} |\widehat{h}(\lambda)|^2 |\lambda|^2 {\rm d}m_d(\lambda) + \int_{|\lambda|> 1/2} |\widehat{h}(\lambda)|^2 {\rm d}m_d(\lambda) \right) \\ &= C_a \left(\int_{|\lambda|\leq 1/2} |\widehat{h}(\lambda)|^2 |\lambda|^2 {\rm d}m_d(\lambda) + \int_{|\lambda|> 1/2} |\widehat{h}(\lambda)|^2 {\rm d}m_d(\lambda) \right).
		\end{align*}
	\end{proof}
	Another application of Theorem \ref{thm:variance_linear_statstic} is a similar formula for the variance of linear statistics of the translation-invariant process $W_{\sf s}$.
		\begin{proof}[Proof of Theorem $\ref{thm:variance_linear_statstic_stat}$]
			As before, we only prove for $R=1$ and then use scaling to get the desired result. By the law of total variance \cite[Exercise~4.1.7, follows easily from Theorem~4.1.15 therin]{durrett} we know that
			\begin{equation}
				\label{eq:conditional_variance_formula}
				\var\left(N_{\sf s}\left(h\right)\right) = \E\left[\var\left(N_{\sf s}\left(h\right)\mid \zeta \right) \right] + \var\left(\E\left[N_{\sf s}\left(h\right) \mid \zeta\right]\right).
			\end{equation}
			We compute each of the terms in (\ref{eq:conditional_variance_formula}) separately. Indeed, since the random vector $\zeta$ is independent of the sequence $\{\xi_n\}$, we apply Theorem \ref{thm:variance_linear_statstic} and obtain
			\begin{align*}
				\var&\left(N_{\sf s}\left(h\right)\mid \zeta \right) \\ &= \sum_{m\in \bZ^d} e\left(\inner{m}{\zeta}\right)\left\{\int_{\mathbb{R}^d}\widehat{h}(\lambda)\widehat{h}(m-\lambda)\left(e^{-a\pi^2|m|^2} - e^{-a\pi^2|\lambda|^2}e^{-a\pi^2|m-\lambda|^2}\right){\rm d} m_{d}(\lambda)\right\}.
			\end{align*}
			By $(\ref{eq:char_function_of_uniform_variable_on_lattice_pts})$ we obtain that
			\[
			\E\left[\var\left(N_{\sf s}\left(h\right)\mid \zeta \right) \right] = \int_{\mathbb{R}^d} |\widehat{h}(\lambda)|^2\left(1-e^{-2a\pi^2|\lambda|^2}\right) {\rm d}m_d(\lambda).
			\]
			To compute the second term of (\ref{eq:conditional_variance_formula}) we use Lemma \ref{lemma:mean_linear_statstic} and Corollary \ref{cor:mean_linear_statistics_stat} to see that
			\begin{align*}
				\var\left(\E\left[N_{\sf s}\left(h\right) \mid \zeta\right]\right) &= \E\left(\sum_{m\in \bZ^d}e\left(\inner{m}{\eta}\right)e^{-a\pi^2|m|^2}\widehat{h}(m)\right)^2 - \widehat{h}(0)^2 \\ &= \sum_{m,m^\prime \in \bZ^d} \E\left[e\left(\inner{m-m^\prime}{\zeta}\right)\right] e^{-a\pi^2(|m|^2 + |m^\prime|^2)}\widehat{h}(m)\overline{\widehat{h}(m^\prime)} - \widehat{h}(0)^2 \\ &\stackrel{(\ref{eq:char_function_of_uniform_variable_on_lattice_pts})}{=} \sum_{m\in \bZ^d \setminus \{0\}} e^{-2a\pi^2|m|^2} |\widehat{h}(m)|^2.
			\end{align*}
			Plugging into relation (\ref{eq:conditional_variance_formula}) yields the desired result.
		\end{proof}
	As a simple consequence of Theorem $\ref{thm:variance_linear_statstic_stat}$, we get a lower bound for the fluctuations for linear statistics of $W_{\sf s}$ as
	\begin{align*}
	{\normalfont \text{Var}}\left(N_{\sf s}(h,R)\right) &\gtrsim R^d\int_{\mathbb{R}^d} |\widehat{h}(\lambda)|^2\left(1-e^{-2a\pi^2|\lambda|^2/R^2}\right) {\rm d}m_d(\lambda) \\ & \gtrsim R^{d-2} \int_{|\lambda|\leq R} |\widehat{h}(\lambda)|^2 |\lambda|^2 \dd m_d(\lambda) + R^d\int_{|\lambda|\ge R} |\widehat{h}(\lambda)|^2 \dd m_d(\lambda).
	\end{align*}
	To show that the corresponding upper bound does not hold (in contrary to Theorem \ref{thm:upper_bound_for_variance}) we have the following simple example.
	\subsection{Variance of the number of points of $W_{\sf s}$ inside a large cube}
	\label{sec:variance_of_number_of_points_inside_large_cube}
	We assume here that $d\ge 2$. Recall that $\mathbf{n}_{\sf s}(Q_R) = N_{\sf s}(\mathbf{1}_{Q},R)$ and that $Q=[-\frac{1}{2},\frac{1}{2}]^d$. Recall from (\ref{eq:fourier_transform_of_cube}) that
	\begin{equation*}
		\widehat{\mathbf{1}_{Q}} (\lambda) = \prod_{j=1}^{d} \text{sinc}(\pi\lambda_j).
	\end{equation*}
	In Section \ref{sec:convex_sets_with_smooth_boundary} we show that
	\[
	R^d\int_{\mathbb{R}^d} |\widehat{\mathbf{1}_{Q}}(\lambda)|^2\left(1-e^{-2a\pi^2|\lambda|^2/R^2}\right) {\rm d}m_d(\lambda) = R^{d-1}\left(\sqrt{\frac{a}{2\pi}}2^d + o(1)\right),
	\]
	as $R\to\infty$ (see Claim \ref{claim:leading_order_for_var_convex_sets}). Still, the infinite sum in Theorem $\ref{thm:variance_linear_statstic_stat}$ can be much larger than $R^{d-1}$ for this particular choice of test function. Indeed, by summing only over the sub-lattice 
	\begin{equation}
		\label{eq:definition_of_the_sublattice}
		\mathcal{Z} \eqdef \{\left(j,0,\ldots,0\right)\in \bZ^d \mid j\in \bZ\},
	\end{equation} 
	we see that 
	\begin{align*}
		\var\left(\mathbf{n}_{{\sf s}} (Q_R)\right) & \ge R^{2d} \sum_{m\in\mathcal{Z}} e^{-2a\pi^2|m|^2} |\widehat{\mathbf{1}_{Q}}(Rm)|^2 \\ &\ge 2R^{2d} \sum_{j=1}^\infty e^{-2a\pi^2j^2} \left(\frac{\sin(\pi R j)}{\pi R j }\right)^2  \gtrsim R^{2(d-1)} \sin^2(\pi R) e^{-2a\pi^2} .
	\end{align*}
	And so, for $d\ge 2$ we finally get that
	\[
	\limsup_{R\to \infty} \frac{\var\left(\mathbf{n}_{{\sf s}} (Q_R)\right)}{R^{d-1}} = +\infty
	\] 
	which implies that the upper bound (\ref{eq:upper_bound_for_variance_general_statistic}) does not hold if we replace $N$ by $N_{\sf s}$.
	\section{Smooth linear statistics}
	\label{sec:smooth_linear_statistics}
	In this section we give the proof of Theorem \ref{thm:variance_for_sobolev_space}. Recall that for $h\in\left(L^1\cap L^2\right)(\mathbb{R}^d)$, Theorem \ref{thm:variance_linear_statstic} asserts that
	\begin{align}
		\label{eq:variance_as_a_sum_after_scaling}
		&\var\left(N(h,R)\right) = \sum_{m\in \mathbb{Z}^d} A_m(h,R) \qquad \text{where,} \\ \label{eq:the_m_term_in_varince_sum_with_scale}
		 A_m(h,R) = R^d\int_{\mathbb{R}^d} &\widehat{h}(\lambda)\widehat{h}(Rm-\lambda)\left(e^{-a\pi^2|m|^2} - e^{-a\pi^2|\lambda|^2/R^2}e^{-a\pi^2|Rm-\lambda|^2/R^2}\right){\rm d} m_{d}(\lambda).
	\end{align}
	The strategy for the proof is to show that the term $A_0(h,R)$ dominates the rest on the sum $\sum_{m\not=0} A_m(h,R)$. As before, we denote by $C_a>0$ an arbitrary constant that depends only on $a>0$.
	
	Recall that for a function $f\in\left(L^1 \cap H^1\right) \left(\mathbb{R}^d\right)$ we have the following identity
	\begin{equation}
	\label{eq:norm_of_gradiant_sobolev_space}
	4\pi^2 \int_{\mathbb{R}^{d}}|\widehat{h}(\lambda)|^2 |\lambda|^2 {\rm d} m_{d}(\lambda) = \int_{\mathbb{R}^d} |\nabla h(x)|^2 {\rm d} m_{d}(x).
	\end{equation} 
	See for instance \cite[Theorem~8.22]{folland}.
	\begin{claim}
		\label{claim:bound_on_remainder_variance_sobolev}
		Let $h\in \left(L^1\cap H^1\right)\left(\mathbb{R}^d\right)$ and let $A_m(h,R)$ be given by (\ref{eq:the_m_term_in_varince_sum_with_scale}). Then for any $m\in \mathbb{Z}^d \setminus \{0\}$
		\[
		|A_m(h,R)| \leq C_a e^{-a\pi^2|m|^2/2} |m| E_h(R),
		\]
		where $E_h(R)$ depends only on $h$ and $R$ and satisfies $E_h(R) = o(R^{d-2})$ as $R\to \infty$.
	\end{claim}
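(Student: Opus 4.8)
The plan is to follow the three–region decomposition used in the proof of Theorem \ref{thm:upper_bound_for_variance}, now carried out directly at scale $R$, and then to extract the gain $o(R^{d-2})$ from the Sobolev hypothesis. Write the exponential difference appearing in (\ref{eq:the_m_term_in_varince_sum_with_scale}) as $e^{-A}-e^{-B}$ with $A=a\pi^2|m|^2$ and $B=a\pi^2\bigl(|\lambda|^2+|Rm-\lambda|^2\bigr)/R^2$. The elementary mean value bound $|e^{-A}-e^{-B}|\le |A-B|\,e^{-\min(A,B)}$ (which, unlike $1-e^{-x}\le x$, is insensitive to the sign of $A-B$) together with the parallelogram inequality $|\lambda|^2+|Rm-\lambda|^2\ge \tfrac12|Rm|^2$ gives $\min(A,B)\ge \tfrac12 a\pi^2|m|^2$, so that the universal prefactor $e^{-a\pi^2|m|^2/2}$ can be pulled out in every region. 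A direct computation yields $|A-B|=2a\pi^2\bigl|\inner{m}{\lambda}/R-|\lambda|^2/R^2\bigr|$.

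Next I would split $\bR^d=\RomanNumeralCaps{1}\cup\RomanNumeralCaps{2}\cup\RomanNumeralCaps{3}$ with $\RomanNumeralCaps{1}=\{|\lambda|\le R/2\}$, $\RomanNumeralCaps{2}=\{|Rm-\lambda|\le R/2\}$ and $\RomanNumeralCaps{3}$ the complement. On $\RomanNumeralCaps{1}$ one has $|\lambda|/R\le\tfrac12\le|m|$, hence $|A-B|\le C_a|m||\lambda|/R$; applying Cauchy–Schwarz and the substitution $\mu=Rm-\lambda$ (which sends $\{|\lambda|\le R/2\}$ into $\{|\mu|\ge R/2\}$, since $|Rm|\ge R$) bounds the $\RomanNumeralCaps{1}$–integral by
\[
C_a e^{-a\pi^2|m|^2/2}\,|m|\,R^{d-1}\Bigl(\int_{|\lambda|\le R/2}|\widehat h(\lambda)|^2|\lambda|^2\,{\rm d}m_d(\lambda)\Bigr)^{1/2}\Bigl(\int_{|\mu|\ge R/2}|\widehat h(\mu)|^2\,{\rm d}m_d(\mu)\Bigr)^{1/2}.
\]
The region $\RomanNumeralCaps{2}$ is identical after the same substitution, while on $\RomanNumeralCaps{3}$ the crude bound $|e^{-A}-e^{-B}|\le 2e^{-a\pi^2|m|^2/2}$ and Cauchy–Schwarz give at most $C_a e^{-a\pi^2|m|^2/2}R^d\int_{|\lambda|>R/2}|\widehat h(\lambda)|^2\,{\rm d}m_d(\lambda)$. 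Collecting the three contributions and using $|m|\ge1$ to absorb the $\RomanNumeralCaps{3}$ term produces the claimed bound with
\[
E_h(R)=R^{d-1}\Bigl(\int_{|\lambda|\le R/2}|\widehat h|^2|\lambda|^2\,{\rm d}m_d\Bigr)^{1/2}\Bigl(\int_{|\mu|\ge R/2}|\widehat h|^2\,{\rm d}m_d\Bigr)^{1/2}+R^d\int_{|\lambda|>R/2}|\widehat h|^2\,{\rm d}m_d.
\]

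Finally I would verify $E_h(R)=o(R^{d-2})$, which is exactly where the hypothesis $h\in H^1$ enters through $\int_{\bR^d}|\widehat h(\lambda)|^2|\lambda|^2\,{\rm d}m_d(\lambda)<\infty$. On $\{|\lambda|>R/2\}$ one has $|\lambda|^2>R^2/4$, so $R^2\int_{|\lambda|>R/2}|\widehat h|^2\,{\rm d}m_d\le 4\int_{|\lambda|>R/2}|\widehat h|^2|\lambda|^2\,{\rm d}m_d\to0$ as the tail of a convergent integral; this makes the second term of $E_h(R)$ equal to $o(R^{d-2})$. For the first term I would bound the near factor by the finite quantity $\bigl(\int_{\bR^d}|\widehat h|^2|\lambda|^2\,{\rm d}m_d\bigr)^{1/2}$ and rewrite the remaining factor as $R\bigl(\int_{|\mu|\ge R/2}|\widehat h|^2\,{\rm d}m_d\bigr)^{1/2}=\bigl(R^2\int_{|\mu|\ge R/2}|\widehat h|^2\,{\rm d}m_d\bigr)^{1/2}\to0$, so the first term is $o(R^{d-2})$ as well. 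The only genuine subtlety, and the step I expect to require the most care, is the bookkeeping that keeps the \emph{far} factor $\widehat h(Rm-\lambda)$ permanently inside the vanishing tail $\{|\mu|\ge R/2\}$ while the \emph{near} factor carries the bounded $H^1$–seminorm; everything else is the scaled analogue of the estimates already established for Theorem \ref{thm:upper_bound_for_variance}.
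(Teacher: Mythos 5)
Your proof is correct and follows essentially the same route as the paper's: the same three-region decomposition $\{|\lambda|\le R/2\}$, $\{|Rm-\lambda|\le R/2\}$ and the complement, the same Cauchy--Schwarz estimates with the substitution $\mu=Rm-\lambda$, and the same extraction of $o(R^{d-2})$ from the vanishing $H^1$ tail $\int_{|\lambda|>R/2}|\widehat h(\lambda)|^2|\lambda|^2\,{\rm d}m_d(\lambda)\to 0$. Your mean value bound $|e^{-A}-e^{-B}|\le |A-B|e^{-\min(A,B)}$ combined with the parallelogram law is a marginally cleaner way to pull out the prefactor $e^{-a\pi^2|m|^2/2}$ than the paper's factoring of $e^{-a\pi^2|m|^2}$ followed by $|1-e^{-x}|\lesssim |x|$, and your merging of the $|\lambda|^2/R^2$ term into the $|m||\lambda|/R$ term collapses the paper's two error terms ${\sf E}_1,{\sf E}_2$ into one, but these are cosmetic differences.
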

	\begin{proof}
		Fix $m\in \mathbb{Z}^d \setminus \{0\}$ and split (\ref{eq:the_m_term_in_varince_sum_with_scale}) into three parts:
		\begin{equation}
		\label{eq:variance_in_sobolev_split_remainder_integral}
		A_m(h,R) = \left(\int_{\RomanNumeralCaps{1}^\prime} +\int_{\RomanNumeralCaps{2}^\prime} +\int_{\RomanNumeralCaps{3}^\prime} \right) (\cdots) {\rm d}m_d(\lambda),
		\end{equation}
		where,
		\[
		{\RomanNumeralCaps{1}}^\prime = \left\{|\lambda| \leq R/2 \right\}, \quad {\RomanNumeralCaps{2}}^\prime = \left\{|\lambda-Rm| \leq R/2 \right\}, \quad \RomanNumeralCaps{3}^\prime = \mathbb{R}^d \setminus \left(\RomanNumeralCaps{1}^\prime\cup \RomanNumeralCaps{2}^\prime\right).
		\]
		We start by bounding the first integral in (\ref{eq:variance_in_sobolev_split_remainder_integral}). By the triangle inequality
		\begin{align}
		\label{eq:first_bound_for_first_integral_sobolev_variance}
		\Big|&R^d\int_{\RomanNumeralCaps{1}^\prime} \widehat{h}(\lambda)\widehat{h}(Rm-\lambda)\left(e^{-a\pi^2|m|^2} - e^{-a\pi^2|\lambda|^2/R^2}e^{-a\pi^2|Rm-\lambda|^2/R^2}\right){\rm d} m_{d}(\lambda)\Big| \\ & \leq e^{-a\pi^2|m|^2}R^{d} \int_{|\lambda|\leq R/2} |\widehat{h}(\lambda)\widehat{h}(Rm-\lambda)||1-e^{-2a\pi^2|\lambda|^2/R^2 - 2a\pi^2 \inner{\lambda}{Rm}/R^2}|{\rm d} m_{d}(\lambda) \nonumber \\ &\leq C_a e^{-a\pi^2|m|^2} \bigg(R^{d-2} \int_{|\lambda|\leq R/2} |\widehat{h}(\lambda)\widehat{h}(Rm-\lambda)| |\lambda|^2 {\rm d} m_{d}(\lambda) \nonumber \\ & \qquad \qquad \qquad \qquad + R^{d-1}|m| \int_{|\lambda|\leq R/2} |\widehat{h}(\lambda)\widehat{h}(Rm-\lambda)| |\lambda| {\rm d} m_{d}(\lambda)\bigg) \nonumber \\ &\eqdef C_a e^{-a\pi^2|m|^2} \left({\sf E_1} + {\sf E_2}\right). \nonumber
		\end{align}
		We turn to bound ${\sf E}_i$ for $i=1,2$. Since $h\in H^1\left(\mathbb{R}^d\right)$ we can use Cauchy-Schwarz inequality and get that
		\begin{align}
		\label{eq:variance_of_sobolev_bound_E_1}
		{\sf E}_1 &= R^{d-2} \int_{|\lambda|\leq R/2} |\widehat{h}(\lambda)\widehat{h}(Rm-\lambda)| |\lambda|^2 {\rm d} m_{d}(\lambda) \\ & \leq R^{d-2} \left(\int_{|\lambda|\leq R/2} |\widehat{h}(\lambda)|^2|\lambda|^2{\rm d}m_d(\lambda)\right)^{1/2} \left(\int_{|\lambda|\leq R/2} |\widehat{h}(Rm-\lambda)|^2|\lambda|^2{\rm d}m_d(\lambda)\right)^{1/2} \nonumber \\ & \leq R^{d-2}\left(\int_{\bR^d} |\widehat{h}(\lambda)|^2|\lambda|^2{\rm d}m_d(\lambda)\right)^{1/2} \left(\int_{|\lambda|\leq R/2} |\widehat{h}(Rm-\lambda)|^2|Rm-\lambda|^2{\rm d}m_d(\lambda)\right)^{1/2} \nonumber\\ & \lesssim R^{d-2} \left(\int_{|\lambda|>R/2} |\widehat{h}(\lambda)|^2|\lambda|^2 {\rm d} m_d(\lambda) \right)^{1/2} = o(R^{d-2}) \nonumber
		\end{align}
		as $R\to \infty$. Note that in the third inequality we used the fact that $|m|\ge 1$. A similar bound can be obtained for ${\sf E}_2$ as
		\begin{align}
		\label{eq:variance_of_sobolev_bound_E_2}
		{\sf E}_2 &= R^{d-1}|m| \int_{|\lambda|\leq R/2} |\widehat{h}(\lambda)\widehat{h}(Rm-\lambda)| |\lambda| {\rm d} m_{d}(\lambda) \\ & \leq R^{d-1}|m| \left(\int_{|\lambda|\leq R/2}|\widehat{h}(\lambda)|^2|\lambda|^2{\rm d}m_d(\lambda)\right)^{1/2}\left(\int_{|\lambda|\leq R/2} |\widehat{h}(Rm-\lambda)|^2{\rm d}m_d(\lambda)\right)^{1/2} \nonumber \\ & \lesssim R^{d-1}|m| \left(\int_{|\lambda|\leq R/2} |\widehat{h}(Rm-\lambda)|^2{\rm d}m_d(\lambda) \right)^{1/2} \nonumber \\ & \lesssim R^{d-2}|m| \left(\int_{|\lambda|>R/2} |\widehat{h}(\lambda)|^2|\lambda|^2{\rm d}m_d(\lambda) \right)^{1/2} = o(R^{d-2}). \nonumber
		\end{align}
		Plugging the bounds (\ref{eq:variance_of_sobolev_bound_E_1}) and (\ref{eq:variance_of_sobolev_bound_E_2}) into (\ref{eq:first_bound_for_first_integral_sobolev_variance}) we can bound the first integral in (\ref{eq:variance_in_sobolev_split_remainder_integral}) as
		\begin{align}
		\label{eq:bound_on_first_integral_claim_sobolev_variance}
		\Big|R^d\int_{\RomanNumeralCaps{1}^\prime} \widehat{h}(\lambda)\widehat{h}(Rm-\lambda)&\left(e^{-a\pi^2|m|^2} - e^{-a\pi^2|\lambda|^2/R^2}e^{-a\pi^2|Rm-\lambda|^2/R^2}\right){\rm d} m_{d}(\lambda)\Big| \\ &\leq C_a e^{-a\pi^2|m|^2}|m| o(R^{d-2}). \nonumber
		\end{align}
		By the change of variables $\mu = Rm - \lambda$ we get that 
		\begin{align}
		\label{eq:bound_on_second_integral_claim_sobolev_variance}
		\Big|R^d\int_{\RomanNumeralCaps{2}^\prime} \widehat{h}(\lambda)\widehat{h}(Rm-\lambda)&\left(e^{-a\pi^2|m|^2} - e^{-a\pi^2|\lambda|^2/R^2}e^{-a\pi^2|Rm-\lambda|^2/R^2}\right){\rm d} m_{d}(\lambda)\Big| \\ &\leq C_a e^{-a\pi^2|m|^2}|m| o(R^{d-2}). \nonumber
		\end{align}
		also holds, so it remains to bound the third integral in (\ref{eq:variance_in_sobolev_split_remainder_integral}). Recall the definition of $\RomanNumeralCaps{3}^\prime$. We use Cauchy-Schwarz once more,
		\begin{align*}
		\Big|&R^d\int_{\RomanNumeralCaps{3}^\prime} \widehat{h}(\lambda)\widehat{h}(Rm-\lambda)\left(e^{-a\pi^2|m|^2} - e^{-a\pi^2|\lambda|^2/R^2}e^{-a\pi^2|Rm-\lambda|^2/R^2}\right){\rm d} m_{d}(\lambda)\Big| \\ & \lesssim e^{-a\pi^2|m|^2/2}R^{d} \left(\int_{\RomanNumeralCaps{3}^\prime}|\widehat{h}(\lambda)|^2{\rm d}m_d(\lambda)\right)^{1/2}\left(\int_{\RomanNumeralCaps{3}^\prime}|\widehat{h}(Rm-\lambda)|^2{\rm d}m_d(\lambda)\right)^{1/2} \\ & \lesssim e^{-a\pi^2|m|^2/2}R^{d} \int_{|\lambda|>R/2} |\widehat{h}(\lambda)|^2{\rm d}m_d(\lambda) \\ &\leq e^{-a\pi^2|m|^2/2}R^{d-2} \int_{|\lambda|>R/2} |\widehat{h}(\lambda)|^2|\lambda|^2{\rm d}m_d(\lambda) = e^{-a\pi^2|m|^2/2} o(R^{d-2}).
		\end{align*}
		Plugging (\ref{eq:bound_on_first_integral_claim_sobolev_variance}), (\ref{eq:bound_on_second_integral_claim_sobolev_variance}) and the above inequliaty into (\ref{eq:variance_in_sobolev_split_remainder_integral}) yields that
		\[
		|A_m(h,R)| \lesssim e^{-a\pi^2|m|^2/2}|m| o(R^{d-2})
		\]
		for all $m\in \mathbb{Z}^d \setminus \{0\}$.
	\end{proof}
	\begin{proof}[Proof of Theorem \ref{thm:variance_for_sobolev_space}]
		Relation (\ref{eq:variance_as_a_sum_after_scaling}) together with Claim \ref{claim:bound_on_remainder_variance_sobolev} yields that
		\begin{equation}
		\label{eq:variance_sobolev_bound_difference_from_main_term}
		\left|\var\left(N(h,R)\right) - A_0(h,R) \right| = o(R^{d-2}).
		\end{equation}
		Hence, to conclude the proof, we find the leading asymptotic term of $A_0(h,R)$. For every fixed $\lambda\in \mathbb{R}^d$ we have that
		\begin{equation*}
		\lim_{R\to\infty} R^2 \left(1-e^{-2a\pi^2|\lambda|^2/R^2}\right) = 2a\pi^2 |\lambda|^2.
		\end{equation*}
		Since $h\in H^1\left(\mathbb{R}^d\right)$ we may apply the dominated convergence theorem and see that
		\begin{align*}
		\lim_{R\to\infty} R^2 \int_{\mathbb{R}^d}|\widehat{h}(\lambda)|^2\left(1-e^{-2a\pi^2|\lambda|^2/R^2}\right) {\rm d} m_d(\lambda) &= 2a\pi^2\int_{\mathbb{R}^d} |\widehat{h}(\lambda)|^2 |\lambda|^2 {\rm d} m_d(\lambda) \\ & \stackrel{(\ref{eq:norm_of_gradiant_sobolev_space})}{=} \frac{a}{2}\int_{\mathbb{R}^d} |\nabla h(x)|^2 {\rm d} m_d(x).
		\end{align*}
		Combining the above with (\ref{eq:variance_sobolev_bound_difference_from_main_term}) yields that
		\begin{align*}
		\lim_{R\to \infty} \frac{\var\left(N(h,R)\right)}{R^{d-2}} = \lim_{R\to\infty} \frac{A_0(h,R)}{R^{d-2}} = \frac{a}{2}\int_{\mathbb{R}^d} |\nabla h(x)|^2 {\rm d} m_d(x).
		\end{align*}
	\end{proof}
	As mentioned in the introduction, the statement in Theorem \ref{thm:variance_for_sobolev_space} is false for the stationary process $W_{\sf s}$. In the next section we provide an example of a function $g\in \left(L^1\cap H^1\right)(\bR^d)$ such that $\var(N_{\sf s}(g,R))$ is large. Still, Theorem $\ref{thm:variance_linear_statstic_stat}$ implies that as long as $$|\widehat{h}(\lambda)| \lesssim (1+|\lambda|)^{-d/2-1}$$ we have that,
	\[
	\lim_{R\to \infty} \frac{\var\left(N_{\sf s}(h,R)\right)}{R^{d-2}} = \frac{a}{2}\int_{\mathbb{R}^d} |\nabla h(x)|^2 {\rm d} m_d(x).
	\]
	\subsection{Large variance for a function in the Sobolev space}
	\label{sec:large_variance_for_function_in_Sobolev_space} 
	It will be more illuminating (and easier) to construct the desired function $g\in (L^1\cap H^1)(\bR^d)$ on the Fourier side. Let $\rho\ge 0$ be a $C^\infty(\bR^d)$ bump function that is supported strictly inside $B_{1/2}$ and has $\rho(0) = 1$. Fix some $\eps>0$ and set
	\begin{equation}
		\label{eq:def_of_function_G}
		G(\lambda) = \sum_{m\in \mathcal{Z}\setminus\{0\}} c_m\rho\left(\frac{\lambda - m}{b_m}\right),
	\end{equation}
	where
	\[
	c_m = b_m |m|^{-1-\eps},  \qquad b_m = |m|^{-1/(d+1)},
	\]
	and $\mathcal{Z}$ is a $1$-dimensional sub-lattice of $\bZ^d$ given as in (\ref{eq:definition_of_the_sublattice}). Notice that $G(m) = c_m\rho(0) = c_m$ for all lattice points $m\in \mathcal{Z}\setminus\{0\}$. By Tonelli theorem we have,
	\begin{align*}
		\int_{\bR^d} G(\lambda) \dd m_d(\lambda) &= \sum_{m\in \mathcal{Z} \setminus\{0\}} c_m \int_{\bR^d} \rho\left(\frac{\lambda}{b_m}\right) \dd m_d(\lambda) \lesssim \sum_{m\in \mathcal{Z} \setminus\{0\}} c_m b_m^d, \\ 
		\int_{\bR^d} G(\lambda)^2 \dd m_d(\lambda) &= \sum_{m\in \mathcal{Z} \setminus\{0\}} c_m^2 \int_{\bR^d} \rho^2\left(\frac{\lambda}{b_m}\right) \dd m_d(\lambda) \lesssim \sum_{m\in \mathcal{Z} \setminus\{0\}} c_m^2 b_m^d,
	\end{align*}
	which implies that $G\in (L^1 \cap L^2)(\bR^d)$. We want to show further that $G(\lambda)|\lambda|\in L^2(\bR^d)$. Indeed,
	\begin{align*}
		\int_{\bR^d} G(\lambda)^2|\lambda|^2 \dd m_d(\lambda) &= \sum_{m\in \mathcal{Z} \setminus\{0\}} c_m^2 \int_{\bR^d} \rho^2\left(\frac{\lambda-m}{b_m}\right)|\lambda|^2 \dd m_d(\lambda) \\ &= \sum_{m\in \mathcal{Z}\setminus\{0\}} c_m^2b_m^d \int_{\bR^d} \rho^2\left(\mu\right)|b_m\mu + m|^2 \dd m_d(\mu) \\ &\lesssim \sum_{m\in \mathcal{Z} \setminus\{0\}} b_m^{d+2} |m|^{-2\eps} \lesssim \sum_{\ell=1}^{\infty} \frac{1}{\ell^{1+2\e}}<\infty .
	\end{align*}
	For $\alpha=(\alpha_1,\ldots,\alpha_n)\in \bN^n$ we use the standard notation for partial derivatives: 
	\[
	|\alpha| \eqdef \sum_{j=1}^n{\alpha_n},\qquad \frac{\partial^\alpha}{(\partial \lambda)^\alpha} \eqdef \left(\frac{\partial}{\partial \lambda_1}\right)^{\alpha_1} \cdots \left(\frac{\partial}{\partial \lambda_n}\right)^{\alpha_n}.
	\] 
	Let $g$ be the inverse Fourier transform of $G$, given by
	\[
	g(x) \eqdef \int_{\bR^d} G(\lambda) e(\inner{\lambda}{x}) \dd m_d(x).
	\]
	First, we verify that $g\in L^1(\bR^d)$. Indeed, the sum (\ref{eq:def_of_function_G}) defining $G$ is absolutely and uniformly convergent, we may differentiate term-wise and see that
	\[
	\frac{\partial^\alpha}{(\partial \lambda)^\alpha}G(\lambda) = \sum_{m\in \mathcal{Z}\setminus\{0\}} \frac{c_m}{b_m^{|\alpha|}}\rho^{(\alpha)}\left(\frac{\lambda - m}{b_m}\right).
	\] 
	Therefore,
	\begin{equation}
		\label{eq:upperbound_for_l1_norm_of_derivatives_sobolev_counter_example}
		\int_{\bR^d} \left|\frac{\partial^\alpha}{(\partial \lambda)^\alpha}G(\lambda)\right| {\rm d}m_d(\lambda)  \lesssim \sum_{m\in \mathcal{Z}\setminus\{0\}} \frac{c_m}{b_m^{|\alpha|-d}} < \infty
	\end{equation}
	provided that $|\alpha| \leq d+1$ (recall (\ref{eq:def_of_function_G})). By a simple integration by parts argument (see for example \cite[Theorem~8.22]{folland}) we conclude from (\ref{eq:upperbound_for_l1_norm_of_derivatives_sobolev_counter_example}) that $$|g(x)|\lesssim (1 + |x|)^{-(d+1)}$$ which in turn implies that $g\in L^1(\bR^d)$. Furthermore, since $\widehat{g}(\lambda) = G(\lambda)$, we have that $g\in H^1(\bR^d)$. We now examine $\var(N_{\sf s}(g,R))$. Clearly,
	\begin{align*}
		R^d\int_{\bR^d} &|\widehat{g}(\lambda)|^2\left(1-e^{-2a\pi^2|\lambda|^2/R^2}\right) \dd m_d(\lambda) \\ &= R^d\int_{\bR^d} |G(\lambda)|^2\left(1-e^{-2a\pi^2|\lambda|^2/R^2}\right) \dd m_d(\lambda)  = \mathcal{O}(R^{d-2}).
	\end{align*}
	Suppose that $R\to\infty$ on the integers. Then, $Rm\in \mathcal{Z}$ for all $m\in \mathcal{Z}$ and by Theorem $\ref{thm:variance_linear_statstic_stat}$  we have the lower bound
	\begin{align*}
		\var\left(N_{\sf s}(g,R)\right) & \ge R^{2d} \sum_{m\in \bZ^d\setminus \{0\}} e^{-2a\pi^2|m|^2} |\widehat{g}(Rm)|^2 \\ &= R^{2d} \sum_{m\in \mathcal{Z} \setminus \{0\}} e^{-2a\pi^2|m|^2} |G(Rm)|^2  \\ &\ge 2R^{2d} \sum_{\ell = 1}^{\infty} e^{-2a\pi^2\ell^2} (R\ell)^{-2(1+\eps+\frac{1}{d+1})} \gtrsim R^{2(d-1) -2\eps - 2/(d+1)}. 
	\end{align*} 
	It remains to observe that
	$
	2(d-1) -2\eps - 2/(d+1) > d-2
	$
	for all $d\ge 2$ provided that $\eps\in (0,1/4)$.
	This observation immediately gives
	\[
	\limsup_{R\to \infty} \frac{\var\left(N_{\sf s}(g,R)\right)}{R^{d-2}} = +\infty.
	\]
	\section{Indicator functions of Convex sets with smooth boundary}
	\label{sec:convex_sets_with_smooth_boundary}
	In this section we give the proof of Theorem \ref{thm:variance_for_convex_sets}. Recall that $K$ is a compact convex set such that $\partial K$ is a smooth manifold with non-vanishing Gaussian curvature, and denote by $\sigma_{d-1}$ the induced Lebesgue surface measure on $\partial K$. With the above assumptions on $K$, we have that for all $\psi\in C^\infty(\mathbb{R}^d)$,
	\begin{equation}
		\label{eq:bound_on_fourier_transfom_of_manifold}
		\left|\int_{\partial K} \psi(x)e(-\inner{\lambda}{x}) {\rm d}\sigma_{d-1}(x) \right| \lesssim \left(1 + |\lambda|\right)^{-(d-1)/2}
	\end{equation}
	where the implicit constant depends only on $\psi$ and the Gaussian curvature of $\partial K$, see \cite[Theorem~7.7.14]{hormander} or \cite[Theorem~1.2.1]{sogge}. In fact, one can recover the upper bound (\ref{eq:bound_on_fourier_transform_of_smooth_set}) on the Fourier transform of $\mathbf{1}_{K}$ using (\ref{eq:bound_on_fourier_transfom_of_manifold}).
	
	Similarly to the proof of Theorem \ref{thm:variance_for_sobolev_space}, the strategy for the proof of Theorem \ref{thm:variance_for_convex_sets} is to use the formula (\ref{eq:variance_as_a_sum_after_scaling}) and show that the term $m=0$ dominates the rest of the sum as $R\to\infty$. First, we find the leading order asymptotic for the term $m=0$.
	\begin{claim}
		\label{claim:leading_order_for_var_convex_sets}
		Suppose that $K\subset \mathbb{R}^d$ is a compact convex set. Then,
		\begin{equation*}
			\lim_{R\to\infty} \frac{A_{0}(\mathbf{1}_K,R)}{R^{d-1}} = \sqrt{\frac{a}{2\pi}} \cdot \sigma_{d-1}(\partial K).
		\end{equation*}
	\end{claim}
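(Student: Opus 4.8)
The plan is to move off the Fourier side and reduce everything to a boundary-layer asymptotic for the \emph{covariogram} of $K$. Writing out (\ref{eq:the_m_term_in_varince_sum_with_scale}) at $m=0$ and using that $\mathbf{1}_K$ is real, so $\widehat{\mathbf{1}_K}(-\lambda)=\overline{\widehat{\mathbf{1}_K}(\lambda)}$, I would first record
\[
\frac{A_0(\mathbf{1}_K,R)}{R^{d-1}} = R\int_{\bR^d}|\widehat{\mathbf{1}_K}(\lambda)|^2\left(1-e^{-2a\pi^2|\lambda|^2/R^2}\right)\dd m_d(\lambda).
\]
Set $g_K(x)=\vol\left(K\cap(K+x)\right)$, the covariogram, so that $g_K=\mathbf{1}_K\star\mathbf{1}_K$ has $\widehat{g_K}=|\widehat{\mathbf{1}_K}|^2$ and $g_K(0)=\vol(K)$. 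Since $e^{-2a\pi^2|\lambda|^2/R^2}=\widehat{\phi_{2a/R^2}}(\lambda)$, applying Plancherel to the ``$1$'' term and the multiplication formula (together with evenness of the Gaussian) to the other, I obtain the physical-space expression
\[
\frac{A_0(\mathbf{1}_K,R)}{R^{d-1}} = R\int_{\bR^d}\left(g_K(0)-g_K(x)\right)\phi_{2a/R^2}(x)\,\dd m_d(x),
\]
where I used $\int\phi_{2a/R^2}\,\dd m_d=1$.

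Next I would rescale. After substituting $x=v/R$, the measure $\phi_{2a/R^2}(x)\,\dd m_d(x)$ becomes exactly the law of a Gaussian vector $V\sim N(0,aI_d)$ in the variable $v$, and the displayed quantity equals $\E\!\left[R\bigl(g_K(0)-g_K(V/R)\bigr)\right]$. The heart of the argument is then the first-order behaviour of $g_K$ at the origin. I would establish two facts, both consequences of convexity alone (no smoothness or curvature needed): (i) the pointwise asymptotic
\[
g_K(0)-g_K(x) = \tfrac12\int_{\partial K}|\inner{x}{n(y)}|\,{\rm d}\sigma_{d-1}(y) + o(|x|)\qquad(x\to0),
\]
where $n(y)$ is the outer unit normal (which exists $\sigma_{d-1}$-almost everywhere since $K$ is convex); and (ii) the global bound $g_K(0)-g_K(x)\le C_K|x|$. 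Both follow from the fact that $g_K(0)-g_K(x)=\vol\left(K\setminus(K+x)\right)$ is the volume of a thin sliver swept out near $\partial K$ when $K$ is translated by $x$; to leading order this equals $|x|$ times the area of the shadow of $K$ in direction $x/|x|$, which by Cauchy's projection formula is $\tfrac12\int_{\partial K}|\inner{x}{n(y)}|\,{\rm d}\sigma_{d-1}(y)$.

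With these in place I would pass to the limit $R\to\infty$. By (i) the integrand converges pointwise, $R\bigl(g_K(0)-g_K(V/R)\bigr)\to\tfrac12\int_{\partial K}|\inner{V}{n(y)}|\,{\rm d}\sigma_{d-1}(y)$, while (ii) supplies the dominating function $C_K|V|\in L^1(N(0,aI_d))$. Dominated convergence and Tonelli then give
\[
\lim_{R\to\infty}\frac{A_0(\mathbf{1}_K,R)}{R^{d-1}} = \tfrac12\int_{\partial K}\E|\inner{V}{n(y)}|\,{\rm d}\sigma_{d-1}(y) = \tfrac12\sqrt{\tfrac{2a}{\pi}}\,\sigma_{d-1}(\partial K) = \sqrt{\tfrac{a}{2\pi}}\,\sigma_{d-1}(\partial K),
\]
where I used that $\inner{V}{n(y)}\sim N(0,a)$ for every unit vector $n(y)$, so $\E|\inner{V}{n(y)}|=\sqrt{2a/\pi}$ is constant in $y$.

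I expect the genuinely delicate point to be step (i): the first-order expansion of the covariogram at the origin for a merely convex body, where $\partial K$ may fail to be smooth. One must make sense of the surface integral (using that the Gauss map is defined almost everywhere) and, crucially, control the $o(|x|)$ remainder uniformly enough that it survives multiplication by $R$ and integration against the spreading Gaussian. This is precisely where convexity, rather than any curvature hypothesis, does the work, and it explains why this Claim can dispense with the non-vanishing curvature assumption required in Theorem \ref{thm:variance_for_convex_sets}.
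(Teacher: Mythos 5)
Your argument is correct and is essentially the paper's own proof: the paper likewise uses Plancherel to pass to the covariogram $\mathbf{1}_K\ast\mathbf{1}_K$, invokes Matheron's result that its directional derivative at the origin is the projection area $m_{d-1}(P_x(K))$, applies dominated convergence against the rescaled Gaussian, and finishes with Cauchy's surface area formula. The only (cosmetic) difference is the order of operations at the end --- you apply Cauchy's projection formula pointwise and compute $\E|\inner{V}{n(y)}|$ per boundary point, while the paper integrates in polar coordinates first and applies Cauchy's formula last; your observation that convexity alone (no curvature) suffices matches the paper's remark following the claim.
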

	Note that we do not assume anything on $\partial K$ in the statement of Claim \ref{claim:leading_order_for_var_convex_sets}. Indeed, we may apply the claim in Section \ref{sec:variance_of_number_of_points_inside_large_cube} also in the case $K=[-\frac{1}{2},\frac{1}{2}]^d$. 
	\begin{proof}
		This claim follows from a simple computation. By Plancherel's formula (see \cite[Theorem~8.29]{folland}),
		\begin{align*}
		A_{0}(\mathbf{1}_K,R) &= R^{d} \int_{\mathbb{R}^d} |\widehat{\mathbf{1}_K}(\lambda)|^2(1-e^{-2a\pi^2|\lambda|^2/R^2}) {\rm d} m_{d}(\lambda) \\ &= R^d \left[m_d(K) - \int_{\mathbb{R}^d} |\widehat{\mathbf{1}_K}(\lambda)|^2e^{-2a\pi^2|\lambda|^2/R^2} {\rm d} m_{d}(\lambda)\right] \\ &= R^d \left[m_d(K) - \frac{R^d}{(2a\pi)^{d/2}} \int_{\mathbb{R}^d} \left(\mathbf{1}_K\ast \mathbf{1}_K\right)(x) e^{-R^2|x|^2/2a} {\rm d} m_{d}(x)\right] \\ &= \frac{R^d}{(2a\pi)^{d/2}} \int_{\mathbb{R}^d} \left[m_d(K) - \left(\mathbf{1}_K\ast \mathbf{1}_K\right)\left(\frac{x}{R}\right)\right] e^{-|x|^2/2a} {\rm d} m_{d}(x).
		\end{align*}
		Since $K$ is convex, we can compute the directional derivative at the origin of the function $\left(\mathbf{1}_K\ast \mathbf{1}_K\right)(\cdot)$, see \cite[Proposition~4.3.1]{matheron}. By Taylor expansion, for every fixed $x\in \mathbb{R}^d$,
		\[
		m_d(K) - \left(\mathbf{1}_K\ast \mathbf{1}_K\right)\left(\frac{x}{R}\right) = m_{d-1}\left(P_{x}(K)\right) \frac{|x|}{R}  + \mathcal{O}\left(\frac{|x|^2}{R^2}\right),
		\]
		where $P_x$ is the linear projection onto the hyperplane $x^\perp = \left\{v\in \mathbb{R}^d \mid \inner{x}{v}=0\right\}$. By the dominated convergence theorem (notice that $\mathbf{1}_K\ast \mathbf{1}_K$ has compact support) we see that
		\begin{align*}
		A_{0}(h,R) &= R^{d-1} \int_{\mathbb{R}^d} |x| m_{d-1}\left(P_{x}(K)\right) e^{-|x|^2/2a} \frac{{\rm d} m_{d}(x)}{(2a\pi)^{d/2}} + \mathcal{O}(R^{d-2}) \\ &= R^{d-1}\left(\int_{0}^{\infty} t^{d} e^{-t^2/2a} \frac{{\rm d}t}{(2a\pi)^{d/2}} \right) \left(\int_{\mathbb{S}^{d-1}} m_{d-1}\left(P_{u}(K)\right) {\rm d} \sigma_{d-1}(u) \right) + \mathcal{O}(R^{d-2}) \\ &=  R^{d-1} \cdot \sqrt{\frac{a}{2}}\frac{\Gamma\left(\frac{d+1}{2}\right)}{\pi^{d/2}} \left(\int_{\mathbb{S}^{d-1}} m_{d-1}\left(P_{u}(K)\right) {\rm d} \sigma_{d-1}(u) \right) + \mathcal{O}(R^{d-2}),
		\end{align*}
		where $\mathbb{S}^{d-1} = \{u\in \mathbb{R}^d : |u|=1\}$ and $\sigma_{d-1}$ is the induced surface measure on it. Finally, by Cauchy's surface area formula \cite[eq. (5.73), p.~301]{schneider}
		\[
		\int_{\mathbb{S}^{d-1}} m_{d-1}\left(P_{u}(K)\right) {\rm d} \sigma_{d-1}(u) = m_{d-1}(B) \sigma_{d-1}(\partial K) = \frac{\pi^{(d-1)/2}}{\Gamma\left(\frac{d+1}{2}\right)} \sigma_{d-1}(\partial K)
		\]
		which finishes the proof of the claim.
	\end{proof}
	\begin{remark*}
		We relate Claim \ref{claim:leading_order_for_var_convex_sets} to the discussion from the introduction of this paper. Suppose we consider i.i.d. perturbations of the lattice points, all with common distribution $\xi$ (which, for the moment, is not necessarily a symmetric Gaussian). Then, provided that $\xi$ has a density, one can prove along the lines of the proof of Claim \ref{claim:leading_order_for_var_convex_sets} that $$\lim_{R\rightarrow\infty} \frac{A_0(\mathbf{1}_K,R)}{R^{d-1}} =  \E\left[|\alpha|P_\alpha(K)\right],$$ 
		where $\alpha = \xi^\prime - \xi^{\prime\prime}$ and $\xi^\prime,\xi^{\prime\prime}$ are independent copies of $\xi$. The same limiting constant appeared in the paper by G\'{a}cs and Sz\'{a}sz \cite{gacs_szasz}, where an extra averaging of the variance was considered (i.e. integrated over all possible translations of $K_R$).
	\end{remark*}
	\begin{claim}
		\label{claim:stationary_phase_upper_bound_convolution_with_convex}
		Suppose that $K\subset \mathbb{R}^d$ is a compact convex set such that $\partial K$ is a smooth closed manifold with nowhere vanishing Gaussian curvature, and that $m\in \bZ^d\setminus 
		\{0\}$. Then
		\begin{equation*}
		\left|\int_{\mathbb{R}^d} \widehat{\mathbf{1}_K}(\lambda)\widehat{\mathbf{1}_K}(Rm-\lambda)e^{-a\pi^2(|\lambda|^2/R^2 - |Rm-\lambda|^2/R^2)}{\rm d} m_{d}(\lambda)\right| \leq C_a e^{-a\pi^2|m|^2/2} (1+R|m|)^{-(d+1)/2}.
		\end{equation*}
	\end{claim}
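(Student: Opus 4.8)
The integral in question is the second term of the summand $A_m(\mathbf{1}_K,R)$ in (\ref{eq:the_m_term_in_varince_sum_with_scale}), whose Gaussian exponent is $-a\pi^2(|\lambda|^2+|Rm-\lambda|^2)/R^2$. Completing the square via $|\lambda|^2+|Rm-\lambda|^2=2|\lambda-\tfrac{Rm}{2}|^2+\tfrac{R^2|m|^2}{2}$ factors this weight as $e^{-a\pi^2|m|^2/2}\,e^{-2a\pi^2|\lambda-Rm/2|^2/R^2}$, which extracts precisely the prefactor $e^{-a\pi^2|m|^2/2}$ appearing on the right-hand side. Thus the claim reduces to the estimate $|J|\le C_a(1+R|m|)^{-(d+1)/2}$ for
\[
J:=\int_{\mathbb{R}^d}\widehat{\mathbf{1}_K}(\lambda)\,\widehat{\mathbf{1}_K}(Rm-\lambda)\,e^{-2a\pi^2|\lambda-Rm/2|^2/R^2}\,{\rm d}m_d(\lambda).
\]
Since $|\widehat{\mathbf{1}_K}|\le m_d(K)$ disposes of the range $R|m|\le 1$, I may assume $R|m|\ge 1$.

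The heart of the argument is to convert $J$ back into a genuine Fourier coefficient of a convex body. Writing $\widehat{\mathbf{1}_K}(\lambda)=\int_K e(-\langle\lambda,x\rangle)\,{\rm d}m_d(x)$ and likewise for the second factor, Fubini gives $J=\iint_{K\times K}e(-\langle Rm,y\rangle)\big[\int_{\mathbb{R}^d}e(-\langle\lambda,x-y\rangle)e^{-2a\pi^2|\lambda-Rm/2|^2/R^2}{\rm d}m_d(\lambda)\big]\,{\rm d}m_d(x)\,{\rm d}m_d(y)$; evaluating the inner Gaussian integral in $\lambda$ and simplifying the resulting phase to $-\tfrac R2\langle m,x+y\rangle$ yields
\[
J=\Big(\tfrac{R^2}{2a\pi}\Big)^{d/2}\iint_{K\times K}e\!\big(-\tfrac R2\langle m,x+y\rangle\big)\,e^{-R^2|x-y|^2/2a}\,{\rm d}m_d(x)\,{\rm d}m_d(y).
\]
The substitution $s=\tfrac{x+y}{2}$, $t=x-y$ (Jacobian $1$) turns the phase into $e(-R\langle m,s\rangle)$ and the constraint $x,y\in K$ into $s\in K_t:=(K-\tfrac t2)\cap(K+\tfrac t2)$, so that
\[
J=\Big(\tfrac{R^2}{2a\pi}\Big)^{d/2}\int_{\mathbb{R}^d}e^{-R^2|t|^2/2a}\,\widehat{\mathbf{1}_{K_t}}(Rm)\,{\rm d}m_d(t).
\]
This displays $J$ as the average of $\widehat{\mathbf{1}_{K_t}}(Rm)$ against the probability density $\big(\tfrac{R^2}{2a\pi}\big)^{d/2}e^{-R^2|t|^2/2a}$, which concentrates on the scale $|t|\sim\sqrt a/R$.

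By the triangle inequality it now suffices to bound $|\widehat{\mathbf{1}_{K_t}}(Rm)|$ uniformly in $t$; crucially no cancellation between different $t$ is required, since the exponential smallness in $|m|$ has already been split off. I would fix a small threshold $t_0>0$. For $|t|>t_0$ I bound $|\widehat{\mathbf{1}_{K_t}}(Rm)|\le m_d(K)$ and use the Gaussian tail $\int_{|t|>t_0}e^{-R^2|t|^2/2a}{\rm d}m_d(t)$, which is super-exponentially small in $R$ and hence negligible against $(1+R|m|)^{-(d+1)/2}$. For $|t|\le t_0$ the set $K_t$ is convex with boundary that is smooth and strictly convex (nowhere-vanishing Gaussian curvature) away from the $(d-2)$-dimensional seam where $\partial(K-\tfrac t2)$ and $\partial(K+\tfrac t2)$ cross; applying the divergence theorem and then the oscillatory surface estimate (\ref{eq:bound_on_fourier_transfom_of_manifold}), with a smooth partition of unity on each strictly convex piece, gives the decay $(1+R|m|)^{-(d+1)/2}$.

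The main obstacle is exactly this last, uniform-in-$t$ estimate. Because $K_t$ is only piecewise smooth I cannot invoke (\ref{eq:bound_on_fourier_transform_of_smooth_set}) verbatim, and I must verify two points: that the smooth faces of $K_t$ retain a curvature lower bound (and $C^\infty$ data) independent of $t$ as $t\to 0$, which follows from the continuity of the geometry since $K_t\to K$; and that the lower-dimensional seam contributes at a strictly faster rate than $(R|m|)^{-(d+1)/2}$ and is therefore absorbed, which should hold because $K_t$ has no flat facets, so that edges can only improve the decay. Carrying out this stationary-phase bookkeeping for a convex body with edges, with constants uniform as $|t|\to0$, is the delicate part of the proof.
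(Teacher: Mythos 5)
Your proposal follows essentially the same route as the paper's: after the identical completing-the-square step, the paper likewise uses Plancherel/Fubini to rewrite the integral as a Gaussian average of $\widehat{\mathbf{1}_{\Lambda_x}}(Rm)$ over the intersections $\Lambda_x = K\cap(x-K)$ (your $K_t$ up to an affine change of variables), bounds it by the supremum over $x$, and then applies the divergence theorem together with the surface estimate (\ref{eq:bound_on_fourier_transfom_of_manifold}) to each of the two smooth boundary pieces, exactly as you propose. The only difference is that you are more explicit about the seam where the two pieces of the boundary meet and about the uniformity of the constants as $t\to 0$ --- points the paper passes over by simply asserting that each piece ``inherits the Gaussian curvature of $\partial K$'' with constants independent of $x$.
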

	We postpone the proof of Claim \ref{claim:stationary_phase_upper_bound_convolution_with_convex} and first prove the theorem.
	\begin{proof}[Proof of Theorem \ref{thm:variance_for_convex_sets}]
		Fix $m\in \mathbb{Z}^d\setminus \{0\}$ for the moment. Notice that
		\[
		A_m(\mathbf{1}_K,R) = A_{m}^{1}(\mathbf{1}_K,R)+ A_{m}^{2}(\mathbf{1}_K,R)
		\]
		where,
		\begin{align*}
			A_m^{1}(\mathbf{1}_K,R) &= e^{-a\pi^2|m|^2} R^d\int_{\mathbb{R}^d} \widehat{\mathbf{1}_K}(\lambda)\widehat{\mathbf{1}_K}(Rm-\lambda) {\rm d} m_{d}(\lambda) \\ A_m^2(\mathbf{1}_K,R) &= R^d \int_{\mathbb{R}^d} \widehat{\mathbf{1}_K}(\lambda)\widehat{\mathbf{1}_K}(Rm-\lambda)e^{-a\pi^2(|\lambda|^2/R^2 - |Rm-\lambda|^2/R^2)}{\rm d} m_{d}(\lambda).
		\end{align*}
		By Plancherel's formula,
		\begin{equation*}
			\int_{\mathbb{R}^d} \widehat{\mathbf{1}_K}(\lambda)\widehat{\mathbf{1}_K}(Rm-\lambda) {\rm d} m_{d}(\lambda) = \int_{\mathbb{R}^d} \mathbf{1}_K(x) \overline{e(\inner{x}{Rm})} {\rm d} m_d(x) = \widehat{\mathbf{1}_K}(Rm).
		\end{equation*}
		Whence, by the upper bound on the Fourier transform of $\mathbf{1}_{K}$ (\ref{eq:bound_on_fourier_transform_of_smooth_set}), we have that
		\[
		|A_m^{1}(\mathbf{1}_K,R)| \leq C_a e^{-a\pi^2|m|^2}|m|^{-(d+1)/2} R^{(d-1)/2}.
		\]
		The above inequality, combined with Claim \ref{claim:stationary_phase_upper_bound_convolution_with_convex} gives that
		\[
		|A_m(\mathbf{1}_K,R)| \leq |A_{m}^{1}(\mathbf{1}_K,R)| + |A_{m}^{2}(\mathbf{1}_K,R)| \leq C_a e^{-a\pi^2|m|^2/2} R^{(d-1)/2}.
		\]
		By (\ref{eq:variance_as_a_sum_after_scaling}), we immediately obtain that
		\begin{equation*}
			\label{eq:inequality_between_variance_and_main_term_convex}
			\left|\var\left(\mathbf{n}(K_R)\right) - A_0(\mathbf{1}_K,R)\right| \leq C_a R^{(d-1)/2},
		\end{equation*} 
		which, together with Claim \ref{claim:leading_order_for_var_convex_sets}, finishes the proof.
	\end{proof}
	\begin{proof}[Proof of Claim \ref{claim:stationary_phase_upper_bound_convolution_with_convex}]
		By the parallelogram law
		\[
		\left|\lambda\right|^2 +\left|Rm-\lambda \right|^2 = \frac{R^2|m|^2 + \left|Rm -2\lambda\right|^2}{2}
		\]
		so it will be enough to prove that
		\begin{equation}
			\label{eq:inequallity_that_finish_claim_convex_sets}
			\left|\int_{\mathbb{R}^d} \widehat{\mathbf{1}_K}(\lambda)\widehat{\mathbf{1}_K}(Rm-\lambda)e^{-2a\pi^2(|\lambda-Rm/2|^2/R^2}{\rm d} m_{d}(\lambda)\right| \lesssim (1+R|m|)^{-(d+1)/2}.
		\end{equation}
		This we do in what follows. We use Plancherel's formula and change of variables to get that
		\begin{align*}
			&\left|\int_{\mathbb{R}^d} \widehat{\mathbf{1}_K}(\lambda)\widehat{\mathbf{1}_K}(Rm-\lambda)e^{-2a\pi^2(|\lambda-Rm/2|^2/R^2}{\rm d} m_{d}(\lambda)\right| \\ &= \frac{R^d}{(2\pi a)^{d/2}}\left|\int_{\mathbb{R}^d} e^{-R^2|x|^2/2a}e(R\inner{m}{x}/2)\left(\int_{\mathbb{R}^d} \mathbf{1}_{K}(y)\mathbf{1}_{K}(x-y)\overline{e(R\inner{m}{y})} {\rm d}m_d(y) \right) {\rm d}m_d(x) \right| \\ & \lesssim \int_{\mathbb{R}^d} e^{-|x|^2} \left|\int_{\mathbb{R}^d} \mathbf{1}_{K}(y)\mathbf{1}_{K}(x-y)e(-R\inner{m}{y}) {\rm d}m_d(y)\right| {\rm d}m_d(x) \lesssim \sup_{x\in\mathbb{R}^d }\left|\widehat{\mathbf{1}_{\Lambda_x}} (Rm)\right|,
		\end{align*}
		where $\Lambda_x \eqdef K \cap (x-K)$. The boundary of $\Lambda_x$ consists of two parts:
		\[
		\partial \Lambda_x = \partial \Lambda_x^{\prime} \cap \partial \Lambda_x^{\prime\prime}
		\]
		where $\partial \Lambda_x^{\prime} = \partial K \cap (x-K)$ and $\partial \Lambda_x^{\prime\prime} = K \cap \partial(x-K)$. Denote by $n(y)$ the outward normal to the surface $\partial \Lambda_x$ at the point $y$. By applying the divergence theorem with the vector field
		\[
		y\mapsto \frac{e\left(-\inner{Rm}{y}\right)}{-2\pi i R |m|} \cdot \frac{m}{|m|},\qquad y\in \mathbb{R}^d,
		\]
		we obtain that
		\begin{align}
			\label{eq:fourier_transform_of_set_with_divergence}
			\widehat{\mathbf{1}_{\Lambda_x}}(Rm) &= \int_{\Lambda_x} e(-\inner{Rm}{y}) {\rm d}m_d(y) \nonumber \\ &= \frac{1}{-2\pi i R |m|}\int_{\partial \Lambda_x} e\left(-\inner{Rm}{y}\right) \inner{\frac{m}{|m|}}{n(y)} {\rm d} \sigma_{d-1}(y) \nonumber \\ &= \frac{1}{-2\pi i R |m|}\left(\int_{\partial \Lambda_x^{\prime}} + \int_{\partial\Lambda_x^{\prime\prime}}\right) (\cdots) {\rm d}\sigma_{d-1}(y) \eqdef \frac{1}{-2\pi i R |m|}\left( {\sf J}^\prime + {\sf J}^{\prime\prime} \right).
		\end{align}
		Notice that $\partial \Lambda_x^{\prime}$ is a smooth $(d-1)$-manifold which inherits the Gaussian curvature of the manifold $\partial K$. Hence, we can apply inequality (\ref{eq:bound_on_fourier_transfom_of_manifold}) with $\psi(y)  = \inner{\frac{m}{|m|}}{n(y)}$ and obtain that
		\[
		|{\sf J}^\prime| = \left|\int_{\partial \Lambda_x^{\prime}} e\left(-\inner{Rm}{y}\right) \Big\langle\frac{m}{|m|},n(y) \Big\rangle {\rm d} \sigma_{d-1}(y) \right| \lesssim \left(1+R|m|\right)^{-(d-1)/2},
		\]
		and the constant does not depend on $x$. Similarly we have that
		$
		|{\sf J}^{\prime\prime}| \lesssim \left(1+R|m|\right)^{-(d-1)/2}
		$
		and hence, by plugging into (\ref{eq:fourier_transform_of_set_with_divergence}) we obtain that
		\[
		\sup_{x\in\mathbb{R}^d } \left|\widehat{\mathbf{1}_{\Lambda_x}}(Rm) \right| \lesssim \left(1+R|m|\right)^{-(d+1)/2}
		\]
		This proves inequality (\ref{eq:inequallity_that_finish_claim_convex_sets}) and hence the claim.
	\end{proof}
	
	\subsection*{Acknowledgments}
	I am deeply grateful to my advisors, Alon Nishry and Mikhail Sodin, for their guidance throughout this work and for many stimulating conversations. I also thank Ofir Karin and Aron Wennman for helpful discussions.


\begin{thebibliography}{999}
		
		\bibitem{fluctuation_kartick} {\sc K. Adhikari, S. Ghosh, and J. Lebowitz},
		{\em Fluctuation and Entropy in Spectrally Constrained random fields}.
		Comm. Math. Phys. {\bf 386} (2021), 749--780.
		
		\bibitem{applebaum} {\sc D. Applebaum},
		{\em Probabilistic trace and {P}oisson summation formulae on
			locally compact abelian groups}.
		Forum Math. {\bf 29} (2017), 501--517.
		
		\bibitem{sedimentation} {\sc M. Duerinckx and A. Gloria},
		{\em Sedimentation of random suspensions and the effect of hyperuniformity}. 
		Ann. PDE. {\bf 8} (2022), Paper No. 2.
		
		\bibitem{durrett} {\sc R. Durrett},
		{\em Probability---theory and examples}.
		5th edition, Cambridge University Press, Cambridge, 2019.
		
		\bibitem{folland} {\sc G. Folland},
		{\em Real analysis---Modern techniques and their applications}.
		2nd edition, John Wiley \& Sons, Inc., New York, 1999.
		
		\bibitem{gacs_szasz} {P. G\'{a}cs and D. Sz\'{a}sz},
		{\em On a problem of {C}ox concerning point processes in
			{$R^{k}$} of ``controlled variability''}.
		The Annals of Probability {\bf 3} (1975), 597--607.
		
		\bibitem{ghosh_lebowitz_survey} {\sc S. Ghosh and J. Lebowitz},
		{\em Fluctuations, large deviations and rigidity in hyperuniform
			systems: a brief survey}.
		Indian J. Pure Appl. Math. {\bf 48} (2017), 609--631.
		
		\bibitem{grafakos} {\sc L. Grafakos},
		{\em Classical {F}ourier analysis}.
		3rd edition, Springer, New York, 2014.
		
		\bibitem{hormander} {\sc L. H\"{o}rmander},
		{\em The analysis of linear partial differential operators. {I}: Distribution theory and Fourier analysis}.
		2nd edition, Springer-Verlag, Berlin, 2003.
		
		\bibitem{klatt_kim_torquato}
		{\sc M. Klatt, J. Kim and S. Torquato},
		{\em Cloaking the underlying long-range order of randomly perturbed lattices}.
		Phys. Rev. E {\bf 101} (2020).
		
		\bibitem{kim_torquato}
		{\sc J. Kim and S. Torquato},
		{\em Effect of window shape on the
			detection of hyperuniformity via
			the local number variance},
		J. Stat. Mech. (2017), 013402.
		
		\bibitem{matheron} {\sc G. Matheron},
		{\em Random sets and integral geometry}.
		John Wiley\thinspace \&\thinspace Sons, New
		York, 1975.
		
		\bibitem{sodin_nazarov} {\sc F. Nazarov, M. Sodin},
		{\em Fluctuations in random complex zeroes: asymptotic normality
			revisited}.
		Int. Math. Res. Not. IMRN {\bf 24} (2011), 5720--5759.
		
		\bibitem{rider_virag} {\sc B. Rider and B. Vir\'{a}g},
		{\em The noise in the circular law and the {G}aussian free field}.
		Int. Math. Res. Not. IMRN {\bf 2} (2007).
		
		\bibitem{schneider} {\sc R. Schneider},
		{\em Convex bodies: the {B}runn-{M}inkowski theory}.
		Cambridge University Press, Cambridge, 2014.
		
		\bibitem{sodin_tsirelson} {\sc M. Sodin and B. Tsirelson},
		{\em Random complex zeroes. {I}. {A}symptotic normality}.
		Israel J. Math. {\bf 144} (2004), 125--149.
		
		\bibitem{sogge} {\sc C. Sogge},
		{\em Fourier integrals in classical analysis}.
		Cambridge University Press, Cambridge, 1993.
		
		\bibitem{torquato_hyperuniform_states} {\sc S. Torquato},
		{\em Hyperuniform states of matter}.
		Phys. Rep. {\bf 745} (2018), 1--95.
		
		\bibitem{torquato_stillinger} {\sc S. Torquato and F. Stillinger},
		{\em Local density fluctuations, hyperuniformity, and order metrics}.
		Phys. Rev. E {\bf 68} (2003), 041113.
		
	\end{thebibliography}
\end{document}